\title[Interaction models on manifolds with bounded curvature]{Long-time behaviour of interaction models on Riemannian manifolds with bounded curvature}
\author[Fetecau]{Razvan C. Fetecau}
\address[Razvan C. Fetecau]{\newline Department of Mathematics, Simon Fraser University, 8888 University Dr., Burnaby, BC V5A 1S6, Canada}
\email{van@math.sfu.ca}
\author[Park]{Hansol Park}
\address[Hansol Park]{\newline Department of Mathematics, Simon Fraser University, 8888 University Dr., Burnaby, BC V5A 1S6, Canada}
\email{hansol\_park@sfu.ca}
\newtheorem{theorem}{Theorem}[section]
\newtheorem{lemma}{Lemma}[section]
\newtheorem{corollary}{Corollary}[section]
\newtheorem{proposition}{Proposition}[section]
\newtheorem{remark}{Remark}[section]
\newtheorem{example}{Example}[section]
\newtheorem{definition}{Definition}[section]
\newcommand{\bbr}{\mathbb R}
\newcommand{\calM}{\mathcal{M}}
\newcommand{\calP}{\mathcal{P}}
\newcommand{\calE}{\mathcal{E}}
\newcommand{\calS}{\mathcal{S}}
\def\d{\,\mathrm{d}}
\newcommand{\supp}{\mathrm{supp}(\rho)}
\newcommand{\bv}{\mbox{\boldmath $v$}}
\newcommand{\V}{v}
\newcommand{\K}{K}
\newcommand{\uc}{\mu}
\newcommand{\lc}{\lambda}
\newcommand{\g}{g}
\newcommand{\di}{n}
\newcommand{\dist}{d}
\newcommand{\inj}{\mathrm{inj}}
\newcommand{\secc}{\mathcal{K}}
\newcommand{\rw}{r_w}
\newcommand{\rc}{r_c}
\newcommand{\N}{N}
\newcommand{\T}{\bar{t}}
\newcommand{\re}{\zeta}
\begin{document}

\date{\today}

\subjclass[2020]{35A30, 35B38, 35B40, 58J90} \keywords{measure solutions, intrinsic interactions,  asymptotic consensus, swarming on manifolds}


\begin{abstract}
We investigate the long-time behaviour of solutions to a nonlocal partial differential equation on smooth Riemannian manifolds of bounded sectional curvature. The equation models self-collective behaviour with intrinsic interactions that are modelled by an interaction potential. We consider attractive interaction potentials and establish sufficient conditions for a consensus state to form asymptotically. In addition, we quantify the approach to consensus, by deriving a convergence rate for the diameter of the solution's support. The analytical results are supported by numerical simulations for the equation set up on the rotation group.
\end{abstract}

\maketitle \centerline{\date}
\section{Introduction}
\label{sect:intro}
In this paper we investigate the long-time behaviour of measure-valued solutions to the following integro-differential equation on a Riemannian manifold $M$: 
\begin{subequations}
\label{eqn:model}
\begin{gather}
\partial_t\rho+\nabla_M\cdot(\rho v)=0, \label{eqn:pde} \\
v=-\nabla_M\K*\rho, \label{eqn:v}
\end{gather}
\end{subequations}
where $K: M\times M \to \bbr$ is an interaction potential, and $\nabla_M \cdot$ and $\nabla_M $ represent the manifold divergence and gradient, respectively. In \eqref{eqn:v} the symbol $\ast$ denotes a measure convolution: for a time-dependent measure $\rho_t$ on $M$ and $x\in M$ we set
\begin{align}\label{B-2}
\K*\rho_t(x)=\int_M\K(x, y)\d\rho_t(y).
\end{align}
 
Equation \eqref{eqn:pde} is in the form of a continuity equation that governs the transport of the measure $\rho$ along the flow on $M$ generated by the velocity field $v$ given by \eqref{eqn:v}. Note that \eqref{eqn:pde} is an {\em active} transport equation, as the velocity field has a nonlocal functional dependence on $\rho$ itself. This geometric interpretation plays a major role in the paper, as measure-valued solutions to \eqref{eqn:model} are defined via optimal mass transport \cite{CanizoCarrilloRosado2011,FePa2021}.  Also, since \eqref{eqn:model} conserves the total mass, we restrict our solutions to be probability measures on $M$ at all times, i.e., $\int_M  \d \rho_t=1$ for all $t \geq 0$. We note that system \eqref{eqn:model} has a discrete/ODE analogue that has an interest in its own \cite{CaChHa2014}, and the general framework of measure-valued solutions includes the discrete formulation as well.

In the literature, equation \eqref{eqn:model} is interpreted as an aggregation or interaction model, with $\rho$ representing the density of a certain population. Indeed, the velocity $v$ at $x$ computed by \eqref{eqn:v} accounts for all contributions from interactions with point masses $y$ in the support of $\rho$, through the convolution. As a result of this interaction, point $x$ moves either toward or away from $y$, depending whether the interaction with $y$ is attractive or repulsive. The nature of the interaction between $x$ and $y$ is set by the vector $-\nabla_M K(x,y)$ (the gradient is taken with respect to $x$), which also provides the direction and magnitude of their interaction \cite{FeZh2019,FePaPa2020,FePa2021}. With such interpretation, model \eqref{eqn:model} has numerous applications in swarming and self-organized behaviour in biology \cite{CarrilloVecil2010}, material science \cite{CaMcVi2006}, robotics \cite{Gazi:Passino,JiEgerstedt2007}, and social sciences \cite{MotschTadmor2014}. Depending on the application, equation \eqref{eqn:model} can model interactions between biological organisms (e.g., insects, birds or cells), robots or even opinions. 

While there is extensive recent analytical and numerical work on solutions to model \eqref{eqn:model}, this research has focused almost exclusively on the model set up on the Euclidean space $\bbr^\di$. For analysis of \eqref{eqn:model} in Euclidean setups we refer to \cite{BodnarVelasquez2, BertozziLaurent, Figalli_etal2011, BeLaRo2011} for the well-posedness of the initial-value problem, to \cite{LeToBe2009, FeRa10, BertozziCarilloLaurent, FeHuKo11,FeHu13} for the long-time behaviour of its solutions, and to \cite{Balague_etalARMA, ChFeTo2015, SiSlTo2015} for studies on minimizers for the associated interaction energy. Numerically, it has been shown that model \eqref{eqn:model} can capture a wide variety of self-collective or swarm behaviours, such as aggregations on disks, annuli, rings and soccer balls \cite{KoSuUmBe2011, Brecht_etal2011, BrechtUminsky2012}.

The literature on solutions to model \eqref{eqn:model} set up on general Riemannian manifolds is very limited, with only a few works on this subject. In this respect we distinguish two approaches: extrinsic and intrinsic. In the {\em extrinsic} approach the manifold $M$ is assumed to have a natural embedding in a larger Euclidean space, and interactions between points on $M$ depend on the Euclidean distance in the ambient space between the points \cite{WuSlepcev2015,CarrilloSlepcevWu2016,PaSl2021}. The second approach considers {\em intrinsic} interactions, which only depend on the intrinsic geometry of the manifold  \cite{FePa2021, FeZh2019}. In particular, the interaction potential can be assumed in this case to depend on the geodesic distance on the manifold between points \cite{FePa2021, FeZh2019}. The goal of the present paper is to consider such interaction potentials and take a fully intrinsic approach to study the long time behaviour of solutions  to \eqref{eqn:model} on general Riemannian manifolds with bounded sectional curvature.  

Well-posedness of measure-valued solutions to equation \eqref{eqn:model} set up on general Riemannian manifolds, with intrinsic interactions, has been established recently in \cite{FePa2021}. Previous works considered the equation on particular manifolds, such as sphere and cylinder \cite{FePaPa2020} and the special orthogonal group $SO(3)$ \cite{FeHaPa2021}. The long-time behaviour of solutions to \eqref{eqn:model} on manifolds has also been considered recently. Rich pattern formation behaviours have been shown for the model with intrinsic interactions on the sphere and the hyperbolic plane \cite{FeZh2019, FePaPa2020}, and on the special orthogonal group $SO(3)$ \cite{FeHaPa2021}. For the extrinsic approach, emergent behaviour has been studied on various manifolds such as sphere \cite{HaChCh2014},  unitary matrices \cite{Lohe2009, HaRy2016, HaKoRy2018}, hyperbolic space \cite{Ha-hyperboloid}, and Stiefel manifolds \cite{HaKaKi2022}. 

In the present research we consider purely attractive potentials and investigate the long time behaviour of the solutions to  \eqref{eqn:model}. For strongly attractive potentials, the focus is on the formation of consensus solutions, where the equilibria consist of an aggregation at a single point. In the engineering literature, achieving such a state is also referred to as synchronization or rendezvous. Bringing a group of agents/robots to a rendezvous configuration is an important problem in robotic control  \cite{Sepulchre2011,TronAfsariVidal2012, Markdahl_etal2018}. We also note here that applications of \eqref{eqn:model} in engineering (robotics) often require a manifold setup, as agents/robots are typically restricted by environment or mobility constraints to remain on a certain manifold. In such applications, for an efficient swarming or flocking, agents must approach each other along geodesics, further motivating the intrinsic approach taken in this paper. The emergence of self-synchronization has also numerous occurrences in biological, physical and chemical systems (e.g., flashing of fireflies, neuronal synchronization in the brain, quantum synchronization) -- see \cite{Lohe2009, Ha-hyperboloid} and references therein. For applications of asymptotic consensus to opinion formation, we refer to \cite{MotschTadmor2014}. 

Emergence of asymptotic consensus in intrinsic interaction models on Riemannian manifolds has been studied recently for certain specific manifolds such as sphere \cite{FePaPa2020} and rotation group \cite{FeHaPa2021}, as well as for general manifolds of constant curvature \cite{FeHaPa2021}. In the current paper we take a very general approach and investigate the formation of consensus on arbitrary manifolds of bounded curvature. We establish sufficient conditions on the interaction potential  and on the support of the initial density for a consensus state to form asymptotically. Compared to the most general results available to date \cite{FeHaPa2021}, the present research improves in several key aspects (see Remark \ref{remark:compare}): i) considers general manifolds of bounded curvature, ii) relaxes the assumptions on the interaction potential, and iii) provides a quantitative rate of convergence to consensus. In particular, by relaxing the assumption on $K$, our study includes now the important class of power-law potentials. We also provide numerical experiments for $M=SO(3)$ and show that the analytical rate of convergence to consensus is sharp.

The paper is structured as follows. 
 In Section \ref{sect:prelim}, we present some preliminaries on the interaction equation \eqref{eqn:model} and on some results from Riemannian geometry; in particular, we set the notion of the solution and the assumptions on the interaction potential. In Section \ref{sect:ss}, we prove the formation of asymptotic consensus for strongly attractive potentials (Theorem \ref{T3.1}).  With an additional assumption on the interaction potential, in Section \ref{sect:conv} we quantify the approach to consensus by establishing a rate of convergence for the diameter of the support of the solution (Theorem \ref{thm:rate-conv}).  In Section \ref{sect:weak-att} we consider weakly attractive potentials and investigate the asymptotic behaviour (Theorem \ref{Convt}).  In Section \ref{sect:numerics}, we present some numerical results for $M=SO(3)$. Finally, the Appendix includes some additional comments on well-posedness and the proofs of several lemmas.



\section{Preliminaries}
\label{sect:prelim}

In this section, we introduce the notion of solutions to model \eqref{eqn:model} and discuss the well-posedness of solutions, as established in \cite{FePa2021}. We then present the gradient flow formulation of model \eqref{eqn:model}, and some concepts and results from Riemannian geometry that we will use in the paper.
 
The following assumptions on the manifold $M$ and interaction potential $K$ will be made throughout the entire paper.  
\smallskip

\noindent(\textbf{M}) $M$ is a complete, smooth Riemannian manifold of finite dimension $\di$, with positive injectivity radius $\inj(M)>0$. We denote its intrinsic distance by $\dist$ and sectional curvature by $\mathcal{K}$. 
\medskip

\noindent(\textbf{K}) The interaction potential $K:M\times M\to\bbr$ has the form
\[
\K(x, y)=\g(\dist(x, y)^2),\qquad \text{ for all }x, y\in M,
\]
where $\g:[0, \infty)\to\bbr$ is differentiable, $\g'$ is locally Lipschitz continuous, and
\begin{equation}
\g'(r^2)\geq 0,\qquad \text{ for all }0< r < \inj(M).
\label{eqn:Katt}
\end{equation}
In particular, \eqref{eqn:Katt} indicates that the potential $K$ is {\em attractive} -- see explanation below.

Anywhere in the paper, $\cdot$ denotes the inner product of two tangent vectors (in the same tangent space) and $\|\cdot\|$ represents the norm of a tangent vector. The tangent bundle of $M$ is denoted by $TM$. We will also omit the subscript $M$ on the manifold gradient and divergence.
 
The expression \eqref{eqn:v} for $v$ (see also \eqref{B-2}) involves the gradient of $K$, which in turn is a function of the squared distance function $d^2$. For all $x,y\in M$ with $\dist(x,y) < \inj(M)$, the gradient with respect to $x$ of $d^2$ is given by
\begin{equation}
\label{eqn:grad-d2}
\nabla_x d(x,y)^2 = -2 \log_x y,
\end{equation}
where $\log_x y$ denotes the Riemannian logarithm map (i.e., the inverse of the Riemannian exponential map) \cite{doCarmo1992}. Hence, by chain rule we have
\begin{equation}
\label{eqn:gradK-gen}
	\nabla_x K(x,y) = -2 g'(d(x,y)^2) \log_x y.
\end{equation}

The velocity at $x$, as computed by \eqref{eqn:v}, considers all interactions with point masses $y \in \supp $.  By \eqref{eqn:gradK-gen}, when a point mass at $x$ interacts with a point mass at $y$ (we assume here $\dist(x,y) < \inj(M)$), the mass at $x$ is driven by a force of magnitude proportional to $|g'(d(x,y))^2|d(x,y)$, to move either towards $y$ (provided $g'(d(x,y)^2) >0$) or away from $y$  (provided $g'(d(x,y)^2) < 0$).  If $g'(x,y)=0$, the two point masses do not interact at all. For a potential that satisfies (\textbf{K}), any two point masses within $\inj(M)$ distance from each other either feel an attractive interaction, or do not interact at all.
 
\subsection{Notion of the solution and well-posedness}
\label{subsect:solution}

Denote by $U \subset M$ a generic open subset of $M$, and by $\calP(U)$ the set of Borel probability measures on the metric space $(U,d)$. Also denote by $C([0,T);\calP(U))$ the set of continuous curves from $[0,T)$ into $\calP(U)$ endowed with the narrow topology (i.e., the topology dual to the space of continuous bounded functions on $U$; see \cite{AGS2005}). 

For $\Psi: \Sigma \to U$, with $\Sigma\subset U$ a measurable set, we denote by $\Psi \# \rho$ the \emph{push-forward} in the mass transport sense of $\rho$ through $\Psi$. Equivalently, $\Psi\#\rho$ is the probability measure such that for every measurable function $\varphi : U \to [-\infty,\infty]$ with $\varphi\circ \Psi$ integrable with respect to $\rho$, it holds that
\begin{equation}
\label{eqn:push-forward}
	\int_U \varphi(x) \d (\Psi \# \rho)(x) = \int_\Sigma \varphi(\Psi(x)) \d\rho(x).
\end{equation}

We define solutions to \eqref{eqn:model} in a geometric way, as the push-forward of an initial density $\rho_0$ through the flow map on $M$ generated by the velocity field $\V$ given by \eqref{eqn:v} \cite[Chapter~8.1]{AGS2005}. To set up terminology, consider a time-dependent vector field $X : U \times [0,T) \to TM$ and a measurable subset $\Sigma \subset U$. The \emph{flow map} generated by $(X,\Sigma)$ is a function $\Psi_X : \Sigma \times [0,\tau) \to U$, for some $\tau\leq T$, that for all $x\in\Sigma$ and $t\in[0,\tau)$ satisfies
\begin{equation} \label{eqn:characteristics-general}
  \begin{cases} \frac{\d}{\d t} \Psi^t_X(x) = X_t(\Psi^t_X(x)),\\[7pt]
    \Psi^0_X(x) = x, \end{cases}
\end{equation}
where we used the notation $X_t$ to denote $X(\cdot,t)$ and $\Psi^t_X$ for $\Psi_X(\cdot,t)$. A flow map is said to be \emph{maximal} if its time domain cannot be extended while \eqref{eqn:characteristics-general} holds; it is said to be \emph{global} if $\tau=T=\infty$ and \emph{local} otherwise.

For $T>0$ and a curve $(\rho_t)_{t\in [0,T)} \subset \calP(U)$, the vector field in model \eqref{eqn:model} is $\V$ given by \eqref{eqn:v}. To indicate its dependence on $\rho$, let us rewrite \eqref{eqn:v} as
\begin{equation} \label{eqn:v-field}
	\V[\rho] (x,t) =  -\nabla K *\rho_t (x), \qquad \mbox{for $(x,t) \in U \times [0,T)$},
\end{equation}
where for convenience we used $\rho_t$ in place of $\rho(t)$, as we shall often do in the sequel. 

We adopt the following definition of solutions to equation \eqref{eqn:model}.
\begin{definition}[Notion of weak solution]\label{defn:sol}
Given $U\subset M$ open, we say that $(\rho_t)_{t\in[0, T)}\subset \calP(U)$ is a weak solution to \eqref{eqn:model} if $(v[\rho], \mathrm{supp}(\rho_0))$ generates a unique flow map $\Psi_{v[\rho]}$ defined on $\mathrm{supp}(\rho_0)\times[0, T)$, and $\rho_t$ satisfies the implicit transport equation
\[
\rho_t=\Psi^t_{v[\rho]}\#\rho_0,\qquad \text{ for all } t\in [0, T).
\]
\end{definition}
Note that by Lemma 8.1.6 of \cite{AGS2005}, a weak solution to \eqref{eqn:model} defined as above is also a weak solution in the sense of distributions. The local well-posedness of solutions to model \eqref{eqn:model} (in the sense of Definition \ref{defn:sol}) was established in \cite[Theorem 4.6]{FePa2021}. For purely attractive potentials, the local well-posedness can be upgraded to global well-posedness \cite[Theorem 5.1]{FePa2021}. 

Before we state the well-posedness result, we introduce some notations which will be used in the paper. For any $\uc\in \bbr$, $\calM_\uc$ denotes the collection of Riemannian manifolds with sectional curvature $\secc$ that satisfies $\lc\leq\secc\leq \uc$ for some $\lc\in\bbr$. In other words, $\calM_\uc$ is the set of Riemannian manifolds with bounded sectional curvature, where $\mu$ is an upper bound of $\secc$.  Also, $B_r(p):=\{x\in M: \dist(x, p)<r\}$ is the open ball centred at $p$, of radius $r$, defined for all $p\in M$ and $r>0$.

Denote
\begin{equation}
\label{eqn:rw}
\rw=\min\left\{
\frac{\inj(M)}{2}, \frac{\pi}{2\sqrt{\mu}}
\right\},
\end{equation}
with the convention that $\frac{1}{\sqrt{\mu}}=\infty$ when $\mu \leq 0$. Note that if $M$ is simply connected, in addition to satisfying (\textbf{M}), then $\inj(M) = \infty$ when $\mu \leq 0$ (cf. \cite[Corollary 6.9.1]{Jost2017}, a consequence of the Cartan--Hadamard theorem). Consequently, in such case $\rw = \infty$. Another remark is that by \cite[Theorem IX.6.1]{Chavel2006}, $B_{\rw}(p)$ is strongly convex for any $p\in M$. In particular, for any two points $x,y\in B_{\rw}(p)$ there exists a unique length-minimizing geodesic connecting $x$ and $y$, that is entirely contained in $B_{\rw}(p)$.

\begin{theorem} (Global well-posedness \cite[Theorem 5.1]{FePa2021})
\label{thm:wp}
Let $M\in \calM_\uc$ for some $\uc\in\bbr$ and $K$ satisfy (\textbf{M}) and (\textbf{K}), respectively. Take an initial density $\rho_0\in \calP(U)$ and $0<r<\rw$ be such that $\mathrm{supp}(\rho_0)\subset \overline{B_r(p)}\subset U$ for some open set $U$. Then, there exists a unique weak solution $\rho$ in $C([0,\infty);\calP(U))$ starting from $\rho_0$ to the interaction equation \eqref{eqn:model}; furthermore, $\mathrm{supp}(\rho_t)\subset\overline{B_r(p)}$ for all $t\geq0$.
\end{theorem}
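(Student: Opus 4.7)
The plan is to combine a local-in-time construction with an a priori confinement estimate that, for purely attractive potentials, traps the support of $\rho_t$ inside the initial ball $\overline{B_r(p)}$. Once the support is confined to this strongly convex ball, the velocity field $v[\rho]$ stays uniformly Lipschitz on $\overline{B_r(p)}$, and the local solution can be iterated to yield a global one.

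For local existence and uniqueness I would work with continuous curves in $\calP(\overline{B_r(p)})$ endowed with the $1$-Wasserstein metric, and realize the weak solution as the fixed point of the map $\rho\mapsto \Psi^{\cdot}_{v[\rho]}\#\rho_0$. The key regularity input is that on $B_{\rw}(p)\times B_{\rw}(p)$ the map $(x,y)\mapsto\log_x y$ is smooth, a consequence of the strong convexity of $B_{\rw}(p)$ together with $r<\rw\leq \inj(M)/2$. Combined with the local Lipschitz continuity of $g'$, the formula $\nabla_x K(x,y)=-2 g'(\dist(x,y)^2)\log_x y$ then shows that $v[\rho]$ is Lipschitz in $x$ uniformly in $\rho$ (as long as supports stay in $\overline{B_r(p)}$) and Lipschitz in $\rho$ with respect to $W_1$. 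A standard Banach fixed-point argument on a short interval produces a unique local weak solution, and uniqueness of the associated flow map is then Cauchy--Lipschitz.

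The decisive step is the confinement $\mathrm{supp}(\rho_t)\subset \overline{B_r(p)}$. Strict geodesic convexity of $x\mapsto \dist(x,p)^2$ on $B_{\rw}(p)$ (cf.\ \cite[Theorem IX.6.1]{Chavel2006}), together with \eqref{eqn:grad-d2}, yields the synthetic inequality
\[
-2 \log_x p \cdot \log_x y \;\leq\; \dist(y,p)^2 - \dist(x,p)^2 \qquad \text{for all }x,y\in B_{\rw}(p).
\]
Set $R(t):=\max_{z\in \mathrm{supp}(\rho_t)} \dist(z,p)$. If $x$ attains this maximum, then $\dist(y,p)\leq \dist(x,p)$ for every $y\in\mathrm{supp}(\rho_t)$, so the right-hand side above is nonpositive. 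Multiplying by $g'(\dist(x,y)^2)\geq 0$ (assumption \eqref{eqn:Katt}) and integrating against $\rho_t$ gives
\[
v[\rho_t](x)\cdot \log_x p \;=\; \int_M 2 g'(\dist(x,y)^2)\, \log_x y\cdot \log_x p \,\dd\rho_t(y) \;\geq\; 0,
\]
i.e.\ the velocity at any farthest point has nonnegative component in the inward direction $\log_x p$. Combined with $\mathrm{supp}(\rho_t)=\Psi^t_{v[\rho]}(\mathrm{supp}(\rho_0))$ from Definition \ref{defn:sol} and the chain-rule identity $\frac{\dd}{\dd t}\dist(\Psi^t_{v[\rho]}(x_0),p)^2 = -2\, \log_{\Psi^t_{v[\rho]}(x_0)} p\cdot v[\rho_t](\Psi^t_{v[\rho]}(x_0))$, a Danskin-type argument then shows that $R(t)$ is nonincreasing, so $R(t)\leq R(0)\leq r$ on the whole existence interval.

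Global existence follows by iterating the local construction: the local time step depends only on the Lipschitz norm of $v[\rho]$ on $\overline{B_r(p)}$, which is controlled uniformly in time by the confinement. The main technical obstacle I anticipate is making the monotonicity of $R(t)$ rigorous when $\rho_t$ is only a measure and the maximizing particle is not a single distinguished trajectory; I would handle this either by an approximation argument with empirical measures (for which the pointwise ODE calculation is straightforward) and then passing to the limit using the stability of the fixed-point construction in $W_1$, or by a direct max-differentiation argument exploiting that every characteristic starting in $\overline{B_r(p)}$ satisfies the same pointwise inequality.
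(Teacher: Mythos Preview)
Your overall strategy---local existence by a fixed-point argument on curves of measures, confinement of the support via an inward-pointing velocity at farthest points, and then iteration---is exactly the shape of the argument in the cited reference. The paper itself does not re-prove the theorem; it quotes \cite[Theorem~5.1]{FePa2021} and only explains, in Appendix~\ref{appendix:wp}, how to relax the radius restriction there from $\mathrm{diam}(U)<\tfrac{\pi}{2\sqrt\uc}$ to $\mathrm{diam}(U)<\tfrac{\pi}{\sqrt\uc}$ (so that $r<\rw$ suffices). The point is that the Lipschitz control on $v[\rho]$ rests on a two-sided bound for $\mathrm{Hess}\,\dist_p^2$ coming from the curvature bounds \eqref{eqn:K-bounds}, and one does not need the Hessian to be nonnegative---only bounded---for the local theory.

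One small correction to your write-up: the reference you give, \cite[Theorem IX.6.1]{Chavel2006}, is about strong convexity of the \emph{ball} $B_{\rw}(p)$ as a set (unique minimizing geodesics lying inside), not about geodesic convexity of the \emph{function} $x\mapsto \dist(x,p)^2$. Your synthetic inequality
\[
-2\log_x p\cdot\log_x y \;\leq\; \dist(y,p)^2-\dist(x,p)^2
\]
is the first-order characterization of convexity of $\dist(\cdot,p)^2$ along the geodesic from $x$ to $y$, and for that you need $\mathrm{Hess}\,\dist_p^2\geq 0$ along this geodesic. This holds on $B_{\rw}(p)$ because $\rw\leq \tfrac{\pi}{2\sqrt\uc}$ forces the lower Hessian bound $2\sqrt{\uc}\,\dist(x,p)\cot(\sqrt{\uc}\,\dist(x,p))$ to be nonnegative (exactly the bound discussed in Appendix~\ref{appendix:wp}); combined with the strong convexity of the ball, which keeps the geodesic inside $B_{\rw}(p)$, your inequality is justified. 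So the confinement step is correct, but the right citation is the Hessian comparison (e.g.\ \cite[Theorem 6.6.1]{Jost2017}) rather than the set-convexity theorem. With that fix, your proposal matches the paper's (cited) approach.
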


We note that due to the attractive nature of the potential, $\overline{B_r(p)}$ in Theorem \ref{thm:wp} is an {\em invariant set} for the dynamics. Also, \cite[Theorem 5.1]{FePa2021} does not have $\rw$ as the maximal radius for well-posedness, but lists a smaller value instead. We explain in Appendix \ref{appendix:wp} how a small change to the argument used there leads to the well-posedness result in Theorem \ref{thm:wp}.


\subsection{Wasserstein distance and gradient flow formulation}
\label{subsect:g-flow}

We will use the \emph{intrinsic $2$-Wasserstein distance} to investigate the asymptotic behaviour of solutions to \eqref{eqn:model}. For $U\subset M$ open, and $\rho,\sigma \in \calP(U)$, this distance is defined as:
\begin{equation*}
	W_2(\rho,\sigma) = \left( \inf_{\gamma \in \Pi(\rho,\sigma)} \int_{U\times U} \dist(x,y)^2 \d\gamma(x,y) \right)^{1/2},
\end{equation*}
where $\Pi(\rho,\sigma) \subset \calP(U\times U)$ is the set of transport plans between $\rho$ and $\sigma$, i.e., the set of elements in $\calP(U\times U)$ with first and second marginals $\rho$ and $\sigma$, respectively. 

Denote by $\calP_2(U)$ the set of probability measures on $U$ with finite second moment; when  $U$ is bounded we have $\calP_2(U) = \calP(U)$. The space $(\calP_2(U),W_2)$ is a metric space. 

The energy functional associated to model \eqref{eqn:model} is given by
\begin{align}
\label{eqn:energy}
E[\rho]=\frac{1}{2}\iint_{M \times M}\K(x, y)\d\rho(x)\d\rho(y).
\end{align}
Using \eqref{eqn:energy}, one can write $\V[\rho]$ in \eqref{eqn:v-field} as $\V[\rho]=-\nabla \left(\frac{\delta E[\rho]}{\delta\rho}\right)$. Furthermore, system \eqref{eqn:model} is the gradient flow of the energy $E$ on $(\calP_2(M),W_2)$ (cf. \cite{AGS2005}), i.e.,
\begin{equation}
\label{eqn:gflow}
\partial_t \rho_t=\nabla\cdot\left(\rho_t\nabla\frac{\delta E[\rho_t]}{\delta\rho_t}\right)= -\nabla_{W_2}E[\rho_t].
\end{equation}
A direct calculation also leads to the following decay of the energy \cite{FeZh2019}:
\begin{equation}
\label{eqn:energy-decay}
\frac{\d}{\d t}E[\rho_t]=-\int_M \|v[\rho_t](x)\|^2 \d\rho_t(x)\leq 0.
\end{equation}

\begin{lemma}\label{L2.1}
$(\calP_2(\overline{B_r(p)}), W_2)$ is a compact subset in $(\calP_2(M), W_2)$ for all $0<r<\rw$ and $p\in M$.
\end{lemma}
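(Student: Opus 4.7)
The plan is to reduce the statement to the classical fact that the space of Borel probability measures on a compact metric space is compact under the narrow (weak) topology, and then to identify this with $W_2$-compactness via the fact that on a bounded set the $W_2$-topology coincides with the narrow one.

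First I would argue that $\overline{B_r(p)}$ is a compact subset of $M$. Since the assumption (\textbf{M}) states that $M$ is complete and $\overline{B_r(p)}$ is closed and bounded (it is contained in a ball of finite geodesic radius), the Hopf--Rinow theorem implies that $\overline{B_r(p)}$ is compact in $(M,\dist)$. Because this set has finite diameter (bounded by $2r$), every Borel probability measure on it automatically has finite second moment, so $\calP_2(\overline{B_r(p)})=\calP(\overline{B_r(p)})$.

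Next I would invoke Prokhorov's theorem on the compact metric space $(\overline{B_r(p)},\dist)$: every sequence $(\rho_n) \subset \calP(\overline{B_r(p)})$ is tight (trivially, since the ambient set is itself compact), hence admits a narrowly convergent subsequence with limit $\rho \in \calP(\overline{B_r(p)})$. To upgrade this to $W_2$-convergence, I would use the standard characterization (see e.g.\ \cite{AGS2005}) that on a set of bounded diameter, narrow convergence is equivalent to convergence in $W_2$; equivalently, one can directly bound the transport cost using $\dist(x,y)\leq 2r$ and the narrow limit. This yields sequential compactness of $\calP_2(\overline{B_r(p)})$ in the $W_2$ metric, which is the same as compactness since $(\calP_2(M),W_2)$ is a metric space.

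Finally, to conclude that $\calP_2(\overline{B_r(p)})$ is compact as a \emph{subset} of $(\calP_2(M),W_2)$, I would observe that the inclusion $\calP_2(\overline{B_r(p)}) \hookrightarrow \calP_2(M)$ is an isometric embedding with respect to $W_2$ (any admissible transport plan between measures supported in $\overline{B_r(p)}$ is itself supported in $\overline{B_r(p)} \times \overline{B_r(p)}$), so compactness in the intrinsic $W_2$ distance on $\calP_2(\overline{B_r(p)})$ transfers directly to compactness in the ambient space. I do not anticipate a serious obstacle; the only point requiring slight care is checking that $W_2$ and narrow convergence agree on measures supported in a common compact set, which follows from the uniform bound $\dist(x,y)^2 \leq 4r^2$ and dominated convergence applied to an optimal plan.
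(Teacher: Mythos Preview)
Your argument is correct and follows the standard route: compactness of $\overline{B_r(p)}$ via Hopf--Rinow, then Prokhorov for narrow compactness of $\calP(\overline{B_r(p)})$, then the equivalence of narrow and $W_2$ convergence on a set of bounded diameter, and finally the isometric embedding into $\calP_2(M)$.

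This differs from the paper's approach. The paper's proof (relegated to the appendix) starts from a sequence $(\rho_n)\subset\calP_2(\overline{B_r(p)})$ that is \emph{assumed} to converge in $W_2$ to some $\rho\in\calP_2(M)$, and then uses the elementary lower bound
\[
W_2(\rho_n,\rho)^2 \;\geq\; \epsilon^2\,\rho\bigl(B_{r+\epsilon}(p)^c\bigr)
\]
to force $\rho(B_{r+\epsilon}(p))=1$ for every $\epsilon>0$, hence $\rho\in\calP_2(\overline{B_r(p)})$. As written, that argument establishes only that $\calP_2(\overline{B_r(p)})$ is \emph{closed} in $(\calP_2(M),W_2)$; the existence of a convergent subsequence is never produced, so strictly speaking compactness is asserted rather than proved. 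Your argument actually supplies the missing sequential-compactness step via Prokhorov, so it is more complete. What the paper's estimate buys is a self-contained, hands-on reason why a $W_2$-limit cannot leak mass outside the closed ball, avoiding any appeal to the narrow/$W_2$ equivalence; but on its own that is not enough for the stated lemma.
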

\begin{proof}
See Appendix \ref{appendix:L2} for the proof.
\end{proof}

Denote by $\calS$ the set of critical points of the energy $E$ with respect to metric $W_2$:
\begin{equation*}
\calS=\{\rho\in \calP_2(M):\nabla_{W_2}E[\rho]\equiv0\}.
\end{equation*}
Based on the gradient flow formulation, solutions to \eqref{eqn:model} have the following asymptotic behaviour.
\begin{proposition}\label{prop:lasalle}
Let $M\in \calM_\uc$ for some $\uc\in\bbr$ and $K$ satisfy (\textbf{M}) and (\textbf{K}), respectively. Take  $\rho_0\in \calP(B_{r}(p))$, with $\mathrm{supp}(\rho_0)\subset \overline{B_r(p)}$ and $0<r<\rw$. Then, we have
\[
\lim_{t\to\infty}W_2(\rho_t, \calE_{r, p})=0,
\]
where 
\begin{align}\label{SE}
\calE_{r, p}:=\calS\cap \calP_2(\overline{B_r(p)}).
\end{align}
\end{proposition}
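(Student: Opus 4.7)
The plan is a LaSalle-type invariance argument on the Wasserstein space, with three standard ingredients: precompactness of the orbit, monotonicity of the Lyapunov function $E$, and continuity of the semiflow in $W_2$. First, Theorem \ref{thm:wp} keeps $\rho_t$ inside $\calP_2(\overline{B_r(p)})$, which is $W_2$-compact by Lemma \ref{L2.1}, so the $\omega$-limit set $\omega(\rho_0)$ is nonempty. Since $K=g(d^2)$ is continuous on the compact set $\overline{B_r(p)}\times\overline{B_r(p)}$, the energy $E$ is $W_2$-continuous and bounded on $\calP_2(\overline{B_r(p)})$; combining this with the dissipation identity \eqref{eqn:energy-decay}, $E[\rho_t]$ decreases to some $E_\infty\in\bbr$, and every $\rho_\infty\in\omega(\rho_0)$ satisfies $E[\rho_\infty]=E_\infty$.

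The heart of the argument is to show $\omega(\rho_0)\subset \calE_{r,p}$, i.e., every $\omega$-limit point is a critical point. Given $\rho_\infty=\lim_n \rho_{t_n}\in\omega(\rho_0)$, I would invoke continuous dependence of solutions on initial data in $W_2$ (a consequence of the fixed-point construction of the flow map in \cite{FePa2021}) to deduce that, for each fixed $s\geq 0$, the solution $(\sigma_s)_{s\geq 0}$ with $\sigma_0=\rho_\infty$ satisfies $\rho_{t_n+s}\to\sigma_s$ in $W_2$. In particular $\sigma_s\in\omega(\rho_0)$ and $E[\sigma_s]=E_\infty$ for all $s\geq 0$. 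Integrating \eqref{eqn:energy-decay} along $(\sigma_s)$ then yields
\[
\int_0^s \int_M \|v[\sigma_\tau](x)\|^2 \d\sigma_\tau(x)\d\tau \;=\; E[\sigma_0]-E[\sigma_s] \;=\; 0 \quad \text{for all } s\geq 0,
\]
forcing $\nabla_{W_2}E[\sigma_\tau]=0$ for a.e. $\tau\geq 0$. Since $v[\rho](x)=-2\int_M g'(d(x,y)^2)\log_x y\d\rho(y)$ depends continuously on $(x,\rho)$ over the compact set $\overline{B_r(p)}\times \calP_2(\overline{B_r(p)})$, the dissipation functional $\rho\mapsto\int_M \|v[\rho](x)\|^2\d\rho(x)$ is $W_2$-continuous, so the identity extends to $\tau=0$, placing $\rho_\infty\in\calS$ and therefore in $\calE_{r,p}$.

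To finish, I would upgrade ``every $\omega$-limit point lies in $\calE_{r,p}$'' to the quantitative statement $W_2(\rho_t,\calE_{r,p})\to 0$ by a standard contradiction: if there existed $\epsilon>0$ and $t_n\to\infty$ with $W_2(\rho_{t_n},\calE_{r,p})\geq\epsilon$, compactness would yield a subsequential $W_2$-limit $\rho_\infty\in\omega(\rho_0)\subset\calE_{r,p}$, contradicting the lower bound. The step I expect to require the most care is the $W_2$-continuous dependence of solutions on initial data, which is what allows shifting the time origin inside the $\omega$-limit argument and guarantees that $\omega(\rho_0)$ is invariant under the flow of \eqref{eqn:model}; this has to be extracted from the well-posedness theory of \cite{FePa2021}, and it drives the rest of the proof.
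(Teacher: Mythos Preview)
Your proposal is correct and follows essentially the same LaSalle-type route as the paper: compactness of $\calP_2(\overline{B_r(p)})$ (Lemma~\ref{L2.1}), positive invariance (Theorem~\ref{thm:wp}), and the energy dissipation identity \eqref{eqn:energy-decay} as Lyapunov structure. The only difference is packaging: the paper invokes an abstract LaSalle Invariance Principle on metric spaces \cite[Theorem~4.2]{walker2013} as a black box, whereas you unpack the argument by hand---which is why you correctly flag continuous dependence on initial data (continuity of the semiflow) as the step needing care, since it is precisely the hypothesis that the abstract theorem requires but the paper does not spell out.
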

\begin{proof}
This is a direct consequence of the results and considerations above, along with LaSalle's Invariance Principle. Indeed, $\calP_2(\overline{B_r(p)})$ is compact in $(\calP_2(M), W_2)$ (Lemma \ref{L2.1}) and positively invariant with respect to the dynamics of \eqref{eqn:model} (Theorem \ref{thm:wp}). By the gradient flow formulation (see \eqref{eqn:gflow} and \eqref{eqn:energy-decay}) and LaSalle's Invariance Principle on general metric spaces \cite[Theorem 4.2]{walker2013}, it holds that
\[
\lim_{t\to\infty}W_2\left(\rho_t, \calS \cap \calP_2(\overline{B_r(p)})\right)=0.
\]
\end{proof}

Critical points of the energy are steady states of \eqref{eqn:model}, as given by the next lemma.
\begin{lemma}\label{L2.2}
If $\rho\in \calS $, then 
\[
\V[\rho] =0 \qquad \text{ a.e. with respect to } \rho. 
\]
\end{lemma}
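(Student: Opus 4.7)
The plan is to translate the abstract vanishing of the Wasserstein gradient $\nabla_{W_2}E[\rho]$ into a pointwise statement about the vector field $v[\rho]$, by invoking the identification of tangent vectors in $(\calP_2(M),W_2)$ already alluded to in the gradient flow formulation \eqref{eqn:gflow}. In the Otto calculus framework (see \cite{AGS2005}), a tangent vector to $\calP_2(M)$ at $\rho$ is represented by a gradient vector field $\xi$, namely an element of the closure in $L^2(\rho;TM)$ of gradients of smooth compactly supported functions, equipped with the inner product
\[
\langle \xi_1,\xi_2\rangle_\rho \;=\; \int_M \xi_1(x)\cdot\xi_2(x)\,\d\rho(x).
\]
Under this identification, the Wasserstein gradient $\nabla_{W_2}E[\rho]$ is represented by $\nabla\bigl(\tfrac{\delta E[\rho]}{\delta \rho}\bigr)$, which by the formula $v[\rho]=-\nabla\bigl(\tfrac{\delta E[\rho]}{\delta \rho}\bigr)$ recorded just before \eqref{eqn:gflow} coincides with $-v[\rho]$.

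The first concrete step I would carry out is to compute the squared tangent-space norm of this gradient,
\[
\|\nabla_{W_2}E[\rho]\|_\rho^2 \;=\; \int_M \|v[\rho](x)\|^2\,\d\rho(x),
\]
which is also the same quantity that appears on the right-hand side of the energy-dissipation identity \eqref{eqn:energy-decay}. The hypothesis $\rho\in\calS$ states that $\nabla_{W_2}E[\rho]\equiv 0$ as an element of $T_\rho\calP_2(M)$; in particular its norm vanishes, forcing the above integral to be zero, and hence $v[\rho](x)=0$ for $\rho$-a.e.\ $x$.

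The only real obstacle is of a bookkeeping nature: making rigorous the phrase \emph{``$\nabla_{W_2}E[\rho]\equiv 0$ means the associated gradient vector field is zero in $L^2(\rho)$''}. This requires either citing the relevant material from \cite{AGS2005} on subdifferentials of $\lambda$-convex functionals on $(\calP_2,W_2)$, or giving a short self-contained argument: take a smooth test variation $\rho_s$ tangent to $-\nabla\cdot(\rho\,v[\rho])$ and use the first variation of $E$ to show that a vanishing Wasserstein gradient is equivalent to the $L^2(\rho)$-vanishing of $\nabla(\delta E[\rho]/\delta\rho)$. Under the standing assumption (\textbf{K}), $g'$ is locally Lipschitz and $\mathrm{supp}(\rho)$ is contained in a convex ball $\overline{B_r(p)}$ with $r<\rw$, so $\nabla K$ is well defined and bounded on the relevant set, ensuring that all manipulations are legitimate.
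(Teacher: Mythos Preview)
Your proposal is correct and takes essentially the same approach as the paper: both arguments unpack the condition $\nabla_{W_2}E[\rho]\equiv 0$ via the Otto tangent-space identification of $\nabla_{W_2}E[\rho]$ with the gradient vector field $-v[\rho]\in L^2(\rho;TM)$. The paper tests the weak equation $\nabla\cdot(\rho\,v[\rho])=0$ against $\nabla\varphi$ for smooth $\varphi$ and concludes from orthogonality (implicitly using that $v[\rho]$ is itself a gradient), whereas you go straight to the squared norm $\int_M\|v[\rho]\|^2\,\d\rho$ via the energy-dissipation identity \eqref{eqn:energy-decay}; this is a slightly cleaner packaging of the same idea.
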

\begin{proof}
Let $\rho\in \calE_{r, p}$. Then, we have $\nabla_{W_2}E[\rho]\equiv 0$ and this yields
\[
-\nabla\cdot(\rho \V[\rho])=\nabla\cdot(\rho\nabla(K*\rho))=0.
\]
From the definition of the weak derivative, for any smooth test function $\varphi$, we have
\[
\int \V[\rho](x)\cdot \nabla \varphi(x) \d\rho(x)=0.
\] 
Since $\nabla\varphi(x)$ is an arbitrary smooth function, $v[\rho]$ is zero a.e. with respect to $\rho$.
\end{proof}


\subsection{Some results from Riemannian geometry}
\label{subsect:riemann}
We state below Rauch's comparison theorem, a key tool we use in our proofs. The Rauch comparison theorem allows us to compare lengths of curves on different manifolds.

\begin{theorem}[Rauch comparison theorem, Proposition 2.5 of \cite{doCarmo1992}]\label{RCT}
Let $M$ and $\tilde{M}$ be Riemannian manifolds that satisfy (\textbf{M}) and suppose that for all $p\in M$, $\tilde{p}\in \tilde{M}$, and $\sigma\subset T_pM$, $\tilde{\sigma}\subset T_{\tilde{p}}\tilde{M}$, the sectional curvatures $\secc$ and $\tilde{\secc}$ of $M$ and $\tilde{M}$, respectively, satisfy
\[
\tilde{\secc}_{\tilde{p}}(\tilde{\sigma})\geq \secc_p(\sigma).
\]
Let $p\in M$, $\tilde{p}\in\tilde{M}$ and fix a linear isometry $i:T_pM\to T_{\tilde{p}}\tilde{M}$. Let $r>0$ be such that the restriction ${\exp_p}_{|B_r(0)}$ is a diffeomorphism and ${\exp_{\tilde{p}}}_{|\tilde{B}_r(0)}$ is non-singular. Let $c:[0, a]\to\exp_p(B_r(0))\subset M$ be a differentiable curve and define a curve $\tilde{c}:[0, a]\to\exp_{\tilde{p}}(\tilde{B}_r(0))\subset\tilde{M}$ by
\[
\tilde{c}(s)=\exp_{\tilde{p}}\circ i\circ\exp^{-1}_p(c(s)),\qquad s\in[0, a].
\]  
Then the length of $c$ is greater or equal than the length of $\tilde{c}$.
\end{theorem}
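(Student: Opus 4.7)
The statement is the classical Rauch comparison theorem, cited from do Carmo; the plan follows the standard Jacobi-field route, which reduces the length comparison to a pointwise inequality on the differentials of the two exponential maps.

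First, I would write $c(s) = \exp_p(v(s))$ for a smooth curve $v(s) \in B_r(0) \subset T_p M$, and decompose $v(s) = r(s)\, u(s)$ with $\|u(s)\| = 1$ and $r(s) \geq 0$ (for simplicity on the subset where $r(s) > 0$; the zero locus can be handled by continuity). By the very definition of $\tilde{c}$ one has $\tilde{v}(s) := i(v(s)) = r(s)\, i(u(s))$ and $\tilde{c}(s) = \exp_{\tilde{p}}(\tilde{v}(s))$, so the associated radial functions agree. Differentiating gives
\[
c'(s) = (d\exp_p)_{v(s)}\bigl(r'(s)\, u(s) + r(s)\, u'(s)\bigr),
\]
and the analogous expression for $\tilde c'(s)$ on $\tilde M$.

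Next, I would apply the Gauss lemma to split each velocity norm into a radial piece and a piece orthogonal to the radial geodesic. Because $i$ is a linear isometry, the radial contribution equals $r'(s)$ in both cases. The orthogonal piece of $(d\exp_p)_{v(s)}(r(s)\, u'(s))$ equals $J(r(s))$, where, for fixed $s$, $J$ is the Jacobi field along $\tau \mapsto \exp_p(\tau\, u(s))$ with $J(0)=0$ and $J'(0) = u'(s)$ (noting $u'(s) \perp u(s)$ since $\|u(s)\|\equiv 1$). The corresponding $\tilde{J}$ on $\tilde M$ has initial data transported by $i$, so $\|\tilde{J}'(0)\| = \|J'(0)\|$ and $\langle \tilde{J}'(0), \tilde{\gamma}'(0)\rangle = 0 = \langle J'(0), \gamma'(0)\rangle$. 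Consequently,
\[
\|c'(s)\|^2 = r'(s)^2 + \|J(r(s))\|^2, \qquad \|\tilde{c}'(s)\|^2 = r'(s)^2 + \|\tilde{J}(r(s))\|^2.
\]

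The crucial step is then the Jacobi-field comparison: under the pointwise curvature inequality $\tilde{\secc} \geq \secc$, one has $\|\tilde{J}(\tau)\| \leq \|J(\tau)\|$ for every $\tau \in [0, r(s)]$, as long as $\tau$ stays below the first conjugate value of $\tilde\gamma$, which is guaranteed by the non-singularity of $\exp_{\tilde p}$ on $\tilde{B}_r(0)$. Combined with the displayed identity this yields $\|\tilde c'(s)\| \leq \|c'(s)\|$ pointwise, and integrating in $s$ delivers the length inequality.

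The main obstacle is the Jacobi-field inequality itself. The standard argument is via the index form: one transplants $\tilde J$ to a vector field along $\gamma$ of equal norm using parallel transport adapted to the isometry, shows that the index form on $\tilde M$ of $\tilde J$ dominates that of the transplanted field on $M$ thanks to the curvature hypothesis, and then uses the minimizing property of Jacobi fields for the index form below the first conjugate point, together with a Sturm-type lemma to rule out earlier vanishing of $\|\tilde J\|$. This is delicate but entirely classical, and I would appeal to the exposition in do Carmo rather than reproduce it here.
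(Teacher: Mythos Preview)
The paper does not prove this theorem; it is quoted verbatim as a tool from do Carmo \cite{doCarmo1992} and used only through its conclusion (see the proof of Lemma~\ref{L2.3}). Your sketch is the standard Jacobi-field/index-form proof that do Carmo gives, and it is correct in outline, so there is nothing to compare against in the paper itself.
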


We will use Theorem \ref{RCT} to compare lengths of curves on $M\in\calM_\uc$ (where $\secc\leq \uc$) and curves on the space of manifolds of constant curvature $\uc$. We recall that on a manifold of constant sectional curvature $\uc \geq 0$, for any points $x, y, z$, the following cosine law holds: \\[5pt]
(Case 1: $\uc>0$) 
\begin{align}\label{B-4}
\begin{aligned}
\cos\left(\sqrt{\uc}\dist(x, y)\right)=& \cos\left(\sqrt{\uc}\dist(x, z)\right) \cos\left(\sqrt{\uc}\dist(y, z)\right)\\
&\hspace{2cm}+\sin\left(\sqrt{\uc}\dist(x, z)\right) \sin\left(\sqrt{\uc}\dist(y, z)\right)\cos\angle(xzy),
\end{aligned}
\end{align}
(Case 2: $\uc=0$)
\begin{align}\label{B-4-1}
\dist(x, y)^2=\dist(x, z)^2+\dist(y, z)^2-2\dist(x, z) \dist(y, z)\cos\angle(xzy).
\end{align}


We combine the Rauch comparison theorem (Theorem \ref{RCT}) and the cosine laws above to obtain the following lemma.
\begin{lemma}\label{L2.3}
Let $M\in \calM_\uc$ with $\uc\in\bbr$ and $x, y, z$ be points on $M$. Then we have the following inequalities for each case.\\
(Case 1: $\uc>0$)
\begin{align}\label{B-4-3}
\begin{aligned}
\cos\left(\sqrt{\uc}\dist(x, y)\right)&\leq \cos\left(\sqrt{\uc}\dist(x, z)\right) \cos\left(\sqrt{\uc}\dist(y, z)\right)\\
&\hspace{2cm}+\sin\left(\sqrt{\uc}\dist(x, z)\right) \sin\left(\sqrt{\uc}\dist(y, z)\right)\cos\angle(xzy),
\end{aligned}
\end{align}
(Case 2: $\uc\leq0$)
\begin{align}\label{B-4-4}
\dist(x, y)^2\geq\dist(x, z)^2+\dist(y, z)^2-2\dist(x, z) \dist(y, z)\cos\angle(xzy).
\end{align}

\end{lemma}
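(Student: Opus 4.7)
The plan is to derive both inequalities from Rauch's Comparison Theorem (Theorem \ref{RCT}), comparing $M$ against a model space $\tilde M$ of constant sectional curvature. For $\uc>0$ I take $\tilde M$ to be the round sphere of radius $1/\sqrt{\uc}$ in $\bbr^{\di+1}$, while for $\uc\leq 0$ I take $\tilde M=\bbr^\di$; in both cases the model has sectional curvature $\tilde\secc\geq\uc\geq\secc$, so the hypothesis of Theorem \ref{RCT} is satisfied. I assume implicitly that $x,y,z$ lie in a ball of radius less than $\rw$ about some common base point, so that $\exp_z$ is a diffeomorphism on a ball containing $x$ and $y$ and the minimizing geodesic from $x$ to $y$ stays inside that ball by strong convexity; this is the setting in which Lemma \ref{L2.3} will be applied in the sequel.

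First I transport the hinge at $z$ to a hinge at a point $\tilde z\in\tilde M$. Fix any linear isometry $i:T_zM\to T_{\tilde z}\tilde M$ and set
\begin{equation*}
\tilde x=\exp_{\tilde z}\circ i\circ\exp_z^{-1}(x),\qquad \tilde y=\exp_{\tilde z}\circ i\circ\exp_z^{-1}(y).
\end{equation*}
Since the exponential maps are radial isometries and $i$ is a linear isometry, the two sides and the angle at the vertex are preserved:
\begin{equation*}
\dist(\tilde x,\tilde z)=\dist(x,z),\qquad \dist(\tilde y,\tilde z)=\dist(y,z),\qquad \angle(\tilde x\tilde z\tilde y)=\angle(xzy).
\end{equation*}
Let $c:[0,a]\to M$ be the minimizing geodesic from $x$ to $y$ (its image lies inside $\exp_z(B_r(0))$ by convexity) and transport it via $\tilde c(s)=\exp_{\tilde z}\circ i\circ\exp_z^{-1}(c(s))$. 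Theorem \ref{RCT} then yields $\mathrm{length}(\tilde c)\leq \mathrm{length}(c)=\dist(x,y)$, and since $\dist(\tilde x,\tilde y)\leq \mathrm{length}(\tilde c)$, I obtain the key comparison
\begin{equation*}
\dist(x,y)\geq \dist(\tilde x,\tilde y).
\end{equation*}

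The two cases now follow from the exact cosine law on the model. For $\uc>0$, equation \eqref{B-4} holds as an equality in $\tilde M$ at $\tilde x,\tilde y,\tilde z$; since $\cos(\sqrt{\uc}\,\cdot\,)$ is monotonically decreasing on $[0,\pi]$ (and $\sqrt{\uc}\dist(x,y)\leq\pi$ by the radius constraint $r<\rw\leq \pi/(2\sqrt{\uc})$), the comparison $\dist(x,y)\geq\dist(\tilde x,\tilde y)$ upgrades \eqref{B-4} to \eqref{B-4-3}. For $\uc\leq 0$, equation \eqref{B-4-1} applied on $\tilde M=\bbr^\di$ is the Euclidean cosine law at $\tilde x,\tilde y,\tilde z$, and combining its equality with $\dist(x,y)^2\geq\dist(\tilde x,\tilde y)^2$ yields \eqref{B-4-4}.

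The main subtlety is the geometric setup rather than any real analytic work: one must check that the hypotheses of Theorem \ref{RCT} (injectivity of $\exp_z$ on a ball containing $x$ and $y$, non-singularity of $\exp_{\tilde z}$ on the corresponding ball in $\tilde M$, and containment of $c$ in that ball) are satisfied for the triangle at hand. Restricting to a ball of radius less than $\rw$ handles all three at once through strong convexity, and none of the subsequent applications of the lemma leave such a ball, so the comparison step fires cleanly every time.
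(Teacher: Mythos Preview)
Your argument is essentially the paper's own proof: construct a comparison hinge in the constant-curvature model via the isometry $\exp_{\tilde z}\circ i\circ\exp_z^{-1}$, apply Rauch (Theorem \ref{RCT}) to the minimizing geodesic from $x$ to $y$ to obtain $\dist(x,y)\geq\dist(\tilde x,\tilde y)$, and then feed this into the exact spherical/Euclidean cosine law. The only difference is presentational---you make explicit the convexity hypothesis needed for $c$ to stay inside the domain of $\exp_z^{-1}$, which the paper leaves implicit.
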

\begin{proof}
(Case 1: $\uc>0$)
Let $\Delta$ be a geodesic triangle with vertices $x, y, z$. We also set three points $\bar{x}, \bar{y}, \bar{z}$ on a manifold $\tilde{M}$ of constant curvature $\uc$, which satisfy 
\begin{align}\label{B-5}
\dist(x, z)=\dist(\bar{x}, \bar{z}),\quad \dist(y, z)=\dist(\bar{y}, \bar{z}),\quad\text{and}\quad\angle(xzy)=\angle(\bar{x}\bar{z}\bar{y}).
\end{align}
We use the cosine law \eqref{B-4} on the manifold $\tilde{M}$ to obtain
\[
\cos\left(\sqrt{\uc}\dist(\bar x, \bar y)\right)= \cos\left(\sqrt{\uc}\dist(\bar x, \bar z)\right) \cos\left(\sqrt{\uc}\dist(\bar y, \bar z)\right)+\sin\left(\sqrt{\uc}\dist(\bar x, \bar z)\right) \sin\left(\sqrt{\uc}\dist(\bar y, \bar z)\right)\cos\angle(\bar x\bar z\bar y).
\]
By substituting \eqref{B-5} into the above relation we find
\begin{align}\label{B-6}
\cos\left(\sqrt{\uc}\dist(\bar x, \bar y)\right)= \cos\left(\sqrt{\uc}\dist(x, z)\right) \cos\left(\sqrt{\uc}\dist(y, z)\right)+\sin\left(\sqrt{\uc}\dist(x, z)\right) \sin\left(\sqrt{\uc}\dist(y, z)\right)\cos\angle(xzy).
\end{align}

In Theorem \ref{RCT}, let the curve $c$ be the length minimizing geodesic which connects $x$ and $y$ on $M$. Then, its image $\tilde{c}$ is a curve that connects $\bar{x}$ and $\bar{y}$ on $\tilde{M}$. By Theorem \ref{RCT},
\begin{equation}
\label{eqn:compare-d}
\dist(x, y)=L(c)\geq L(\tilde{c})\geq \dist(\bar{x}, \bar{y}),
\end{equation}
which yields
\begin{align}\label{B-7}
\cos\left(\sqrt{\uc}\dist(x, y)\right)\leq \cos\left({\uc}\dist(\bar{x}, \bar{y})\right).
\end{align}
Finally, we combine \eqref{B-6} and \eqref{B-7} to obtain the desired result \eqref{B-4-3}.\\

\noindent (Case 2: $\uc\leq0$) Use a very similar idea, and take a comparison triangle that satisfies \eqref{B-5} on a manifold $\tilde{M}$ of constant $0$ curvature. Then, use \eqref{B-4} on $\tilde{M}$ for $\bar x$, $\bar y$ and $\bar z$, and combine with  \eqref{B-5} and \eqref{eqn:compare-d} to obtain \eqref{B-4-4}. We omit the details.
\end{proof}


\section{Strongly attractive potentials: asymptotic behaviour}
\label{sect:ss}
In this section we consider {\em strongly attractive} interaction potentials, which we define as follows. 
\begin{definition}[Strongly attractive potential]\label{defn:strong-pot}
An interaction potential $K$ is called {\em strongly attractive} if it satisfies assumption (\textbf{K}), with \eqref{eqn:Katt} replaced by the stronger condition
\begin{equation}
\g'(r^2) > 0,\qquad \text{ for all }0< r < \inj(M).
\label{eqn:Katt-s}
\end{equation}
\end{definition}
The strict inequality sign in \eqref{eqn:Katt-s} implies that any two points feel a non-trivial attractive interaction. 

We will characterize the set of equilibrium points $\calE_{r, p}$ (see \eqref{SE}), and show asymptotic consensus for strongly attractive potentials. In particular, we will find an explicit form of $\calE_{r, p}$, and establish formation of consensus for initial densities supported in $\overline{B_r(p)}$, for all $p\in M$ and $0<r<\rw$.

\begin{lemma}\label{steady}
Assume that $M\in\calM_\uc$ with $\uc\in\bbr$ satisfies (\textbf{M}) and $K$ is a strongly attractive potential. Then, for all $0<r<\rw$ and $p\in M$, we have
\[
\calE_{r,p}=\{\delta_{q}:q\in \overline{B_r(p)}\}.
\]
\end{lemma}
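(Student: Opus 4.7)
My plan is to prove the two inclusions separately. The direction $\{\delta_q : q \in \overline{B_r(p)}\} \subseteq \calE_{r,p}$ is immediate: using $\nabla_x K(x,y) = -2g'(\dist(x,y)^2)\log_x y$ from \eqref{eqn:gradK-gen} and $\log_q q = 0$, we get $v[\delta_q](q) = -\nabla K * \delta_q(q) = 2g'(0)\log_q q = 0$. Hence $\delta_q v[\delta_q] \equiv 0$ as a vector measure, so $\nabla_{W_2}E[\delta_q] = \nabla\cdot(\delta_q v[\delta_q]) = 0$, placing $\delta_q$ in $\calS$; since $\delta_q \in \calP_2(\overline{B_r(p)})$, it lies in $\calE_{r,p}$.

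For the reverse inclusion, take $\rho \in \calE_{r,p}$ and assume toward contradiction that $\rho$ is not a Dirac. By Lemma \ref{L2.2} and the continuity of $v[\rho]$ (inherited from smoothness of $K$ on $\overline{B_r(p)} \times \overline{B_r(p)}$), $v[\rho](x) = 0$ for every $x \in \supp$. The key choice is to test the vanishing velocity against the direction pointing toward the \emph{center} $p$ of the ball, rather than toward a diameter-realizing partner as one might first try. Namely, let $x_0 \in \supp$ attain $R := \max_{x \in \supp}\dist(x, p) = \dist(x_0, p)$, which exists by compactness of $\supp$. Since $\rho \neq \delta_p$, there is some $y \in \supp$ with $\dist(y,p) > 0$, so $0 < R \leq r < \rw$. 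Dotting $v[\rho](x_0) = 0$ with $\log_{x_0} p$ and using $\log_{x_0} y \cdot \log_{x_0} p = \dist(x_0, y)\, R\, \cos\angle(y x_0 p)$ yields
\begin{equation}
\label{eqn:plan-critzero}
0 = \int_M g'(\dist(x_0, y)^2)\, \dist(x_0, y)\, \cos\angle(y x_0 p)\, \d\rho(y).
\end{equation}
I would then show the integrand in \eqref{eqn:plan-critzero} is non-negative on $\supp$ and strictly positive on $\supp \setminus \{x_0\}$; because $\rho$ is not a Dirac, this set is nonempty, and continuity of the integrand (in $y$) forces the integral to be strictly positive, contradicting \eqref{eqn:plan-critzero}.

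The main obstacle, and the step where Lemma \ref{L2.3} is essential, is proving $\cos\angle(y x_0 p) > 0$ for $y \in \supp \setminus \{x_0\}$. I would apply Lemma \ref{L2.3} to the geodesic triangle $(y, x_0, p)$ at the vertex $x_0$, crucially exploiting the extremality $\dist(y, p) \leq R$: for $\uc \leq 0$, the inequality \eqref{B-4-4} rearranges immediately to $\cos\angle(y x_0 p) \geq \dist(y, x_0)/(2R) > 0$; for $\uc > 0$, the inequality \eqref{B-4-3} combined with $\cos(\sqrt{\uc}\dist(y,p)) \geq \cos(\sqrt{\uc} R)$ yields
\[
\sin(\sqrt{\uc}\dist(y, x_0))\sin(\sqrt{\uc} R)\cos\angle(y x_0 p) \geq \cos(\sqrt{\uc} R)\bigl(1 - \cos(\sqrt{\uc}\dist(y, x_0))\bigr),
\]
which is strictly positive because $R < \pi/(2\sqrt{\uc})$ forces $\cos(\sqrt{\uc} R) > 0$ and $\dist(y, x_0) > 0$ forces $1 - \cos(\sqrt{\uc}\dist(y, x_0)) > 0$. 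Together with $g'(\dist(x_0, y)^2) > 0$ from strong attractivity, this completes the pointwise positivity claim. It is worth emphasizing why the more natural diameter-pair direction $\log_{x_0} y_0$ fails in the $\uc > 0$ regime: one only knows $\dist(x_0, y_0) \leq 2r < \pi/\sqrt{\uc}$, and $\cos(\sqrt{\uc}\dist(x_0, y_0))$ need not be positive, so the analogous rearrangement has indeterminate sign. Testing against $\log_{x_0} p$ bypasses this defect and makes the proof uniform in $\uc$.
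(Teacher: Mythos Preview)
Your proposal is correct and follows essentially the same approach as the paper: for the nontrivial inclusion you pick $x_0\in\supp$ at maximal distance $R$ from $p$, dot $v[\rho](x_0)=0$ against $\log_{x_0}p$, and use Lemma~\ref{L2.3} in the two curvature cases to force the integrand to be nonnegative and vanish only at $y=x_0$. The only cosmetic differences are that the paper derives quantitative lower bounds on $\cos\angle(p\tilde{x}z)$ (e.g.\ $\geq \cot(\sqrt{\uc}R)\tan(\sqrt{\uc}\dist(\tilde{x},z)/2)$) rather than just positivity, and for the easy inclusion it argues that $\delta_q$ is a global energy minimizer instead of directly checking $v[\delta_q](q)=0$.
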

\begin{proof}
Fix an arbitrary $p \in M$ and $0<r<\rw$. We will show $\calE_{r, p}\subseteq \{\delta_{q}:q\in \overline{B_r(p)}\}$ (first part) and $\calE_{r, p}\supseteq \{\delta_{q}:q\in \overline{B_r(p)}\}$ (second part).\\

\noindent \underline{{\em Part 1}}: $\calE_{r, p}\subseteq \{\delta_{q}:q\in \overline{B_r(p)}\}$. Let $\rho \in \calE_{r, p}$ be an equilibrium density of system \eqref{eqn:model}.  Since $\supp$ is a compact set, define
\[
R:=\max_{x\in \supp}\dist(x, p).
\]
When $R=0$, we define by convention $\overline{B_R(p)}:=\{p\}$. Clearly, $0\leq R\leq r<\rw$.  Take $\tilde{x}\in \supp$ which satisfies $\dist(\tilde{x}, p)=R$, and for any $z\in \overline{B_R(p)}$, consider the geodesic triangle $\Delta\tilde{x}zp$. 
\smallskip

\noindent $\diamond$ (Case 1: $\uc>0$) From Lemma \ref{L2.3}, we have 
\begin{equation}
\label{eqn:cos-ineq}
\cos(\sqrt{\mu}\dist(p, z))\leq \cos(\sqrt{\mu}\dist(p, \tilde{x}))\cos(\sqrt{\mu}\dist(\tilde{x}, z))+\sin(\sqrt{\mu}\dist(p, \tilde{x}))\sin(\sqrt{\mu}\dist(\tilde{x}, z))\cos\angle(p\tilde{x}z).
\end{equation}
Note that 
\[
\dist(\tilde{x}, p)=R < \rw \leq \frac{\pi}{2 \sqrt{\uc}}, \quad \text{ and } \quad \dist(\tilde{x}, z) \leq  \dist(\tilde{x}, p) + \dist(p,z) \leq 2R < \frac{\pi}{\sqrt{\uc}},
\]
and hence, 
\[
\sin(\sqrt{\mu}\dist(p, \tilde{x})) >0, \qquad \sin(\sqrt{\mu}\dist(\tilde{x}, z))>0.
\]

Using \eqref{eqn:cos-ineq} we then get
\[
\frac{\cos(\sqrt{\mu}\dist(p, z))- \cos(\sqrt{\mu}\dist(p, \tilde{x}))\cos(\sqrt{\mu}\dist(\tilde{x}, z))}{\sin(\sqrt{\mu}\dist(p, \tilde{x}))\sin(\sqrt{\mu}\dist(\tilde{x}, z))}\leq \cos\angle(p\tilde{x}z).
\]
Now use the inequality above, the monotonicity of cosine and
\[
\dist(p, z)\leq R = \dist(p, \tilde{x}),
\]
to get
\begin{align*}
\cos\angle(p\tilde{x}z)
&\geq\frac{\cos(\sqrt{\mu}\dist(p, \tilde{x}))- \cos(\sqrt{\mu}\dist(p, \tilde{x}))\cos(\sqrt{\mu}\dist(\tilde{x}, z))}{\sin(\sqrt{\mu}\dist(p, \tilde{x}))\sin(\sqrt{\mu}\dist(\tilde{x}, z))}\\[3pt]
&=\cot(\sqrt{\mu}R)\tan(\sqrt{\mu}\dist(\tilde{x}, z)/2).
\end{align*}

We then obtain
\begin{align*}
\log_{\tilde{x}}p\cdot\log_{\tilde{x}}z&=\dist(\tilde{x}, p)\dist(\tilde{x}, z)\cos\angle(p\tilde{x}z)
\geq\frac{R}{\tan(\sqrt{\mu}R)}\tan(\sqrt{\mu}\dist(\tilde{x}, z)/2)\dist(\tilde{x}, z).
\end{align*}
In the inequality above, use $\sqrt{\mu} \dist(\tilde{x},z)/2 <\pi/2$ and
\[
\tan r\geq r, \qquad \textrm{ for all } r\in[0, \pi/2),
\]
to get
\[
\log_{\tilde{x}}p\cdot\log_{\tilde{x}}z\geq \frac{\sqrt{\mu}R}{2\tan(\sqrt{\mu}R)} \dist({\tilde{x}}, z)^2.
\]
By using \eqref{eqn:gradK-gen}, one can write the velocity field in \eqref{eqn:v-field} as
\begin{equation}
\label{eqn:v-exp}
v[\rho](\tilde{x})=\int_{\supp} 2g'(\dist(\tilde{x}, z)^2)\log_{\tilde{x}}z \d\rho(z).
\end{equation}
We then combine the two equations above to get
\begin{align*}
v[\rho](\tilde{x})\cdot \log_{\tilde{x}}p&=\int_{\supp} 2g'(\dist(\tilde{x}, z)^2)\log_{\tilde{x}}z\cdot\log_{\tilde{x}}p \d\rho(z)\\[5pt]
&\geq  \frac{\sqrt{\mu}R}{\tan(\sqrt{\mu}R)}
\int_{\supp} g'(\dist(\tilde{x}, z)^2)\dist(\tilde{x}, z)^2 \d\rho(z).
\end{align*}

Finally, we find
\begin{equation}
\label{eqn:normv-lb}
\|v[\rho](\tilde{x})\|\geq \frac{1}{R}v[\rho](\tilde{x})\cdot \log_{\tilde{x}}p\geq \frac{\sqrt{\mu}}{\tan(\sqrt{\mu}R)}
\int_{\supp} g'(\dist(\tilde{x}, z)^2)\dist(\tilde{x}, z)^2 \d\rho(z).
\end{equation}
By Lemma \ref{L2.2}, since $\rho$ is an equilibrium density, $v[\rho]$ is zero a.e. with respect to $\rho$. This yields $\|v[\rho](\tilde{x})\|=0$, since $\tilde{x}\in \supp$ and $v[\rho]$ is continuous on $\supp$. Hence, by \eqref{eqn:normv-lb} we have
\begin{equation}
\label{eqn:int-zero}
\int_{\supp} g'(\dist(\tilde{x}, z)^2)\dist(\tilde{x}, z)^2 \d\rho(z)=0.
\end{equation}
Since the integrand above is sign definite, we have
\[
g'(\dist(\tilde{x}, z)^2)d(\tilde{x}, z)^2=0, \qquad \text{ for } z \in \supp, \text{ a.e. with respect to } \rho.
\]
By \eqref{eqn:Katt-s}, this implies that $z=\tilde{x}$ for $z\in \supp$, a.e. with respect to $\rho$. Consequently, $\supp=\{\tilde{x}\}$ and $\rho=\delta_{\tilde{x}}$.
\medskip

\noindent $\diamond$ (Case 2: $\uc\leq 0$) We proceed similarly. From Lemma \ref{L2.3}, we have
\[
\dist(p, z)^2\geq \dist(p, \tilde{x})^2+\dist(\tilde{x}, z)^2-2\dist(p, \tilde{x})\dist(\tilde{x}, z)\cos\angle(p\tilde{x}z).
\]
This yields
\[
\frac{ \dist(p, \tilde{x})^2+\dist(\tilde{x}, z)^2-\dist(p, z)^2}{2\dist(p, \tilde{x})\dist(\tilde{x}, z)}\leq\cos\angle(p\tilde{x}z).
\]
Then use 
\[
\dist(p, z)\leq R = \dist(p,\tilde{x}),
\]
to get
\begin{align*}
\cos\angle(p\tilde{x}z)
&\geq\frac{ \dist(p, \tilde{x})^2+\dist(\tilde{x}, z)^2-\dist(p, \tilde{x})^2}{2\dist(p, \tilde{x})\dist(\tilde{x}, z)}
=\frac{\dist(\tilde{x}, z)}{2R}.
\end{align*}
Hence,
\[
\log_{\tilde{x}}p\cdot\log_{\tilde{x}}z=\dist(\tilde{x}, p)\dist(\tilde{x}, z)\cos\angle(p\tilde{x}z)\geq \frac{1}{2}\dist(\tilde{x}, z)^2.
\]

As in Case 1, using the expression \eqref{eqn:v-exp} of $v[\rho](\tilde{x})$ we then find
\begin{align*}
v[\rho](\tilde{x})\cdot\log_{\tilde{x}}p \geq\int_S g'(\dist(\tilde{x}, z)^2)\dist(\tilde{x}, z)^2 \d\rho(z),
\end{align*}
from which, by a similar argument we conclude that $\rho=\delta_{\tilde{x}}$. This concludes the proof of Part 1.
\medskip

\noindent \underline{{\em Part 2}}: $\calE_{r, p}\supseteq \{\delta_{q}:q\in \overline{B_r(p)}\}$.
We calculate the energy $E[\delta_q]$ from \eqref{eqn:energy} to get
\[
E[\delta_q]=\frac{1}{2}\iint_{S\times S}g(\dist(x, y)^2)\d\delta_q(x)\d\delta_q(y)=\frac{1}{2}g(d(q, q)^2)=\frac{g(0)}{2}.
\]
On the other hand, for any $\rho\in\calP(\overline{B_r(p)})$, we have
\[
E[\rho]=\frac{1}{2}\iint_{\overline{B_r(p)}\times \overline{B_r(p)}}g(\dist(x, y)^2) \d\rho(x) \d\rho(y)\geq\frac{1}{2}\iint_{\overline{B_r(p)}\times \overline{B_r(p)}}g(0)\d\rho(x)\d\rho(y)=\frac{g(0)}{2}.
\]
This implies that $\delta_q$ is a global minimizer of the energy and in particular, a critical point. 
\end{proof}

We combine Lemma \ref{steady} and Proposition \ref{prop:lasalle} to obtain the following theorem.

\begin{theorem}\label{T3.1}
Assume that $M\in\calM_\uc$ for some $\uc\in\bbr$ satisfies (\textbf{M}), and $K$ is a strongly attractive potential. Also assume $\mathrm{supp}(\rho_0)\subset \overline{B_{r}(p)}$ for some $p\in M$ and $0<r<\rw$, and let $\rho_t$ be a weak solution to system \eqref{eqn:model} with initial density $\rho_0$. Then, $\rho_t$ exhibits asymptotic consensus in the following sense:
\begin{align}\label{CIF}
\lim_{t\to\infty}\iint_{\overline{B_r(p)}\times \overline{B_r(p)}}\dist(x, y)\d\rho_t(x)\d\rho_t(y)=0.
\end{align}
\end{theorem}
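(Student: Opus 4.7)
The plan is to reduce the conclusion directly to Proposition \ref{prop:lasalle} together with the characterization of $\calE_{r,p}$ given by Lemma \ref{steady}. Once we know that the $\omega$-limit set of $\rho_t$ in the $W_2$-topology consists solely of Dirac masses supported in $\overline{B_r(p)}$, convergence to consensus in the sense of \eqref{CIF} should follow by a short argument combining the triangle inequality, Cauchy--Schwarz, and the identity $W_2(\rho,\delta_q)^2=\int \dist(x,q)^2\,\d\rho(x)$.

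Concretely, I would proceed as follows. First, invoke Proposition \ref{prop:lasalle} to obtain $W_2(\rho_t,\calE_{r,p})\to 0$ as $t\to\infty$, and then substitute the description from Lemma \ref{steady} to rewrite this as
\[
\inf_{q\in\overline{B_r(p)}} W_2(\rho_t,\delta_q)\longrightarrow 0.
\]
Since $\overline{B_r(p)}$ is compact and $q\mapsto W_2(\rho_t,\delta_q)$ is continuous, the infimum is attained at some point $q_t\in\overline{B_r(p)}$, and therefore $W_2(\rho_t,\delta_{q_t})\to 0$.

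Next, I would use the fact that transporting any $\rho_t$ onto a Dirac is trivial: the only coupling is the product, giving $W_2(\rho_t,\delta_{q_t})^2=\int_{\overline{B_r(p)}}\dist(x,q_t)^2\,\d\rho_t(x)$. By the triangle inequality $\dist(x,y)\leq \dist(x,q_t)+\dist(q_t,y)$ and Fubini,
\[
\iint \dist(x,y)\,\d\rho_t(x)\,\d\rho_t(y)\leq 2\int \dist(x,q_t)\,\d\rho_t(x),
\]
and then Jensen's inequality (or Cauchy--Schwarz, using that $\rho_t$ is a probability measure) gives
\[
\int \dist(x,q_t)\,\d\rho_t(x)\leq \left(\int \dist(x,q_t)^2\,\d\rho_t(x)\right)^{1/2}=W_2(\rho_t,\delta_{q_t}).
\]
Chaining these bounds and passing to the limit yields \eqref{CIF}.

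There is essentially no serious obstacle here, since all the substantive work has already been done in Proposition \ref{prop:lasalle} (LaSalle-type convergence) and Lemma \ref{steady} (characterization of equilibria as Diracs in the strongly attractive case). The only point to be slightly careful about is justifying that the infimum defining $W_2(\rho_t,\calE_{r,p})$ is actually attained at some $q_t\in\overline{B_r(p)}$, which I would handle by compactness of $\overline{B_r(p)}$ and continuity of $q\mapsto W_2(\rho_t,\delta_q)$; alternatively one could simply pick any near-minimizer $q_t$ and the proof goes through unchanged.
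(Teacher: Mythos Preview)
Your proposal is correct and follows essentially the same approach as the paper: combine Proposition \ref{prop:lasalle} with Lemma \ref{steady} to obtain $W_2(\rho_t,\delta_{q_t})\to 0$ for some $q_t\in\overline{B_r(p)}$, then use the explicit formula for $W_2$ to a Dirac together with Cauchy--Schwarz and the triangle inequality to bound $\iint \dist(x,y)\,\d\rho_t(x)\,\d\rho_t(y)$ by $2\,W_2(\rho_t,\delta_{q_t})$. The only minor difference is that you are slightly more explicit about why the infimum over $q$ is attained.
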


\begin{proof}
Combine Proposition \ref{prop:lasalle} and Lemma \ref{steady} to obtain
\begin{equation}
\label{eqn:W2-lim}
\lim_{t\to\infty}W_2(\rho_t,\delta_{q(t)})=0,
\end{equation}
for some time-dependent delta measure at $q(t) \in \overline{B_r(p)}$. 

The distance $W_2(\rho_t, \delta_{q(t)})$ is given by
\[
W_2(\rho_t, \delta_{q(t)})=\left(\int_{\overline{B_r(p)}}\dist(x,q(t))^2 \d\rho_t(x)\right)^{1/2}.
\]

Recall by Theorem \ref{thm:wp} that $\mathrm{supp}(\rho_t)\subset\overline{B_r(p)}$ for all $t\geq0$. We apply the Cauchy--Schwartz inequality to get
\begin{align*}
W_2\left(\rho_t, \delta_{q(t)}\right)^2&=\left(\int_{\overline{B_r(p)}} \d\rho_t(x)\right)\left(\int_{\overline{B_r(p)}}\dist(x,q(t))^2 \d\rho_t(x)\right)\\
&\geq\left(
\int_{\overline{B_r(p)}}\dist(x, q(t)) \d\rho_t(x)
\right)^2.
\end{align*}
This yields
\begin{align*}
W_2(\rho_t, \delta_{q(t)})&\geq\int_{\overline{B_r(p)}}\dist(x, q(t)) \d\rho_t(x)\\
&=\frac{1}{2}\left(\int_{\overline{B_r(p)}}\dist(x, q(t))\d\rho_t(x)+\int_{\overline{B_r(p)}}\dist(y, q(t))\d\rho_t(y)\right)\\
&=\frac{1}{2}\iint_{\overline{B_r(p)}\times\overline{B_r(p)}}(\dist(x, q(t))+\dist(y, q(t))\d\rho_t(x) \d\rho_t(y)\\
&\geq\frac{1}{2}\iint_{\overline{B_r(p)}\times\overline{B_r(p)}}\dist(x, y) \d\rho_t(x)\d\rho_t(y),
\end{align*}
where we used triangle inequality at the last step. The conclusion of the theorem follows from the inequality above and \eqref{eqn:W2-lim}.
\end{proof}

\begin{remark}
The consensus result in Theorem \ref{T3.1} is in integral form and does not provide any information on the asymptotic behaviour of the diameter of the support of $\rho_t$ or about the rate of convergence in the limit \eqref{eqn:W2-lim}. A quantitative study is done in Section \ref{sect:conv} under a stricter condition on the potential and on the size of the initial support.
\end{remark}


\section{Convergence rate of the diameter of the support}
\label{sect:conv}

In this section we will assume that the initial density is supported in $\overline{B_r(p)}$ with $p \in M$ and
\[
0<r<\rc,
\]
where $\rc$ is given by
\begin{equation}
\label{eqn:rc}
\rc=\min\left\{
\frac{\inj(M)}{2}, \frac{\pi}{4\sqrt{\mu}}
\right\}.
\end{equation}
We make again the convention that $\frac{1}{\sqrt{\mu}}=\infty$ when $\mu \leq 0$. Note first that $\rc \leq \rw$, hence the well-posedness and asymptotic consensus results (Theorems \ref{thm:wp} and \ref{T3.1}, respectively) continue to hold. Also, if $\mu\leq 0$ and $M$ is simply connected then $\inj(M) = \infty$ by Cartan-Hadamard theorem; consequently, $\rc = \rw=\infty$ in this case.

To obtain the convergence rate of the diameter of $\mathrm{supp}(\rho_t)$, we make the following additional assumption on $K$.
\medskip

\noindent(\textbf{Kc}) $K:M\times M\to \bbr$ is a {\em strongly attractive} potential (Definition \ref{defn:strong-pot}) that satisfies
\[
\begin{cases}
\displaystyle\frac{\theta}{\sin(\sqrt{\uc}\theta)}g'(\theta^2)&\quad\text{is non-decreasing when }\uc>0,\\[10pt]
g'(\theta^2)&\quad\text{is non-decreasing when }\uc\leq0,
\end{cases}
\]
for $0\leq \theta < 2\rc$.

We present first some key lemmas.
\begin{lemma}\label{L4.1}
Let $M\in \calM_{\uc}$ for some $\uc\in\bbr$ and $K$ satisfy (\textbf{M}) and (\textbf{Kc}), respectively. Consider three points $x, y, z\in \overline{B_r(p)}$ for some $p\in M$ and $0<r<\rc$, $x\neq y$. Then, the following inequalities hold for each case. 
\smallskip

\noindent (Case 1: $\uc>0$)
\begin{align}\label{D-4-0}
\begin{aligned}
&\g'(\dist(x, z)^2)\log_xz\cdot\log_xy+\g'(\dist(y, z)^2)\log_yz\cdot\log_yx\\
&\hspace{7cm}\geq \frac{\sin(2\sqrt{\uc}(r_c-r))}{2}\g'(\dist(x, y)^2/4)\dist(x, y)^2.
\end{aligned}
\end{align}
(Case 2: $\uc\leq0$)
\begin{align}\label{D-4-1}
\g'(\dist(x, z)^2)\log_xz\cdot\log_xy+\g'(\dist(y, z)^2)\log_yz\cdot\log_yx\geq \frac{1}{2}g'(\dist(x, y)^2/4) \dist(x, y)^2.
\end{align}

\end{lemma}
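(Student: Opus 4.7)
The plan is to combine the cosine-law inequalities from Lemma~\ref{L2.3} with the monotonicity built into assumption~(\textbf{Kc}). Introduce the shorthand $a=\dist(x,y)$, $b=\dist(x,z)$, $c=\dist(y,z)$, $\alpha=\angle(zxy)$, $\beta=\angle(zyx)$, so the LHS equals
\[
\g'(b^2)\,ab\cos\alpha + \g'(c^2)\,ac\cos\beta.
\]
The triangle inequality forces $b+c\geq a$, hence $\max(b,c)\geq a/2$, and since $x,y,z\in\overline{B_r(p)}$ we also have $b,c\leq 2r<2\rc$, which places everything in the range where (\textbf{Kc}) gives monotonicity and (in Case~1) the relevant cosines remain strictly positive.

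For Case~2 ($\uc\leq 0$), apply Lemma~\ref{L2.3} at vertices $x$ and $y$ to obtain $2ab\cos\alpha \geq a^2+b^2-c^2$ and $2ac\cos\beta \geq a^2+c^2-b^2$. Multiply by $\g'(b^2)/2$ and $\g'(c^2)/2$ respectively (both nonnegative), add, and rearrange to
\[
\g'(b^2)\,ab\cos\alpha + \g'(c^2)\,ac\cos\beta \;\geq\; \frac{a^2}{2}\bigl(\g'(b^2)+\g'(c^2)\bigr) + \frac{1}{2}\bigl(\g'(b^2)-\g'(c^2)\bigr)\bigl(b^2-c^2\bigr).
\]
The second term is nonnegative by the monotonicity of $\g'(\cdot^2)$, and $\g'(b^2)+\g'(c^2)\geq \g'(\max(b^2,c^2))\geq \g'(a^2/4)$ since $\max(b,c)\geq a/2$; this yields~\eqref{D-4-1}.

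Case~1 ($\uc>0$) follows the same philosophy via the spherical version of Lemma~\ref{L2.3}, which yields
\[
\sin(\sqrt{\uc}a)\sin(\sqrt{\uc}b)\cos\alpha \geq P, \qquad \sin(\sqrt{\uc}a)\sin(\sqrt{\uc}c)\cos\beta \geq Q,
\]
with $P:=\cos(\sqrt{\uc}c)-\cos(\sqrt{\uc}a)\cos(\sqrt{\uc}b)$ and $Q:=\cos(\sqrt{\uc}b)-\cos(\sqrt{\uc}a)\cos(\sqrt{\uc}c)$. Writing $\g'(b^2)b = F(b)\sin(\sqrt{\uc}b)$ with $F(\theta):=\theta\g'(\theta^2)/\sin(\sqrt{\uc}\theta)$ (nondecreasing by~(\textbf{Kc})), we bound the LHS below by $\frac{a}{\sin(\sqrt{\uc}a)}\bigl(F(b)P+F(c)Q\bigr)$. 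The key algebraic identities
\[
P+Q = \bigl(\cos(\sqrt{\uc}b)+\cos(\sqrt{\uc}c)\bigr)\bigl(1-\cos(\sqrt{\uc}a)\bigr), \quad P-Q = \bigl(\cos(\sqrt{\uc}c)-\cos(\sqrt{\uc}b)\bigr)\bigl(1+\cos(\sqrt{\uc}a)\bigr)
\]
let us split $F(b)P+F(c)Q = \frac{F(b)+F(c)}{2}(P+Q) + \frac{F(b)-F(c)}{2}(P-Q)$; the second piece is nonnegative because $F$ is nondecreasing while $\cos(\sqrt{\uc}\cdot)$ is nonincreasing on $(0,2\rc)$, so the two monotonicities cooperate.

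Discarding the nonnegative piece and applying the half-angle identities $1-\cos(\sqrt{\uc}a)=2\sin^2(\sqrt{\uc}a/2)$ and $\sin(\sqrt{\uc}a)=2\sin(\sqrt{\uc}a/2)\cos(\sqrt{\uc}a/2)$, the bound collapses to $\frac{a\tan(\sqrt{\uc}a/2)}{2}(F(b)+F(c))(\cos(\sqrt{\uc}b)+\cos(\sqrt{\uc}c))$. Using $F(b)+F(c)\geq F(a/2)$ (from $\max(b,c)\geq a/2$ and monotonicity of $F$) and $\cos(\sqrt{\uc}b)+\cos(\sqrt{\uc}c)\geq 2\cos(2\sqrt{\uc}r)$ (from $b,c\leq 2r$) reduces the expression to $\frac{a^2\g'(a^2/4)\cos(2\sqrt{\uc}r)}{2\cos(\sqrt{\uc}a/2)}$. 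Finally, $\cos(\sqrt{\uc}a/2)\leq 1$ together with the identity $\sin(2\sqrt{\uc}(\rc-r))=\cos(2\sqrt{\uc}r)$ (immediate from $\rc=\pi/(4\sqrt{\uc})$) gives~\eqref{D-4-0}. The main obstacle is orchestrating the Case~1 computation so that the monotonicity of $F$ and the anti-monotonicity of $\cos(\sqrt{\uc}\cdot)$ align; the $P\pm Q$ splitting is precisely the identity that makes this alignment transparent.
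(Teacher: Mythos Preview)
Your argument is essentially the same as the paper's: both cases use Lemma~\ref{L2.3} at the two base vertices, then the polarization identity $F(b)P+F(c)Q=\tfrac{F(b)+F(c)}{2}(P+Q)+\tfrac{F(b)-F(c)}{2}(P-Q)$ to isolate a nonnegative cross term via the monotonicity in~(\textbf{Kc}), followed by $\max(b,c)\geq a/2$ and the half-angle identities.

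There is one slip in the last line of Case~1: you assert the \emph{identity} $\sin(2\sqrt{\uc}(\rc-r))=\cos(2\sqrt{\uc}r)$ ``immediate from $\rc=\pi/(4\sqrt{\uc})$'', but by definition $\rc=\min\{\inj(M)/2,\,\pi/(4\sqrt{\uc})\}$ may be strictly smaller than $\pi/(4\sqrt{\uc})$. What actually holds is the inequality $\cos(2\sqrt{\uc}r)\geq\sin(2\sqrt{\uc}(\rc-r))$, obtained from $\rc\leq\pi/(4\sqrt{\uc})$ (so $2\sqrt{\uc}(\rc-r)\leq \pi/2-2\sqrt{\uc}r$ and both angles lie in $[0,\pi/2]$). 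Since the inequality goes the right way, your bound $\tfrac{a^2 g'(a^2/4)}{2}\cos(2\sqrt{\uc}r)$ still dominates the stated right-hand side of~\eqref{D-4-0}, and the proof goes through; this is exactly how the paper handles it (bounding $d(x,z)\leq 2r\leq \pi/(2\sqrt{\uc})-2(\rc-r)$ before taking cosines).
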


\begin{proof}
While this lemma is essential for our main result, its proof is rather lengthy, and we present it in Appendix \ref{app1}.
\end{proof}

The following lemma gives an estimate on how the distance between two characteristic paths that originate from different points evolves in time.
\begin{lemma}\label{flowest}
Assume $M\in \calM_{\uc}$ for some $\uc\in\bbr$ and $K$ satisfy (\textbf{M}) and (\textbf{Kc}), respectively. Consider a weak solution $\rho_t$ of \eqref{eqn:model} with initial data $\mathrm{supp}(\rho_0)\subset \overline{B_r(p)}$ for some $p\in M$ and $0<r<\rc$. Then, for any $x,y \in \textrm{supp }(\rho_0)$, $x \neq y$, we have the following estimate:
\begin{align*}
\frac{\d}{\d t}\dist(\Psi_{v[\rho]}^t(x), \Psi_{v[\rho]}^t(y))
\leq-C_\uc\g'(\dist(\Psi_{v[\rho]}^t(x), \Psi_{v[\rho]}^t(y))^2/4) \, \dist(\Psi_{v[\rho]}^t(x), \Psi_{v[\rho]}^t(y)),
\end{align*}
where $C_\uc$ is given as
\begin{equation}
\label{eqn:Cmu}
C_\uc=\begin{cases}
\sin(2\sqrt{\mu}(\rc-r)),&\qquad\text{when }\mu>0,\\[2pt]
1,&\qquad\text{when }\mu\leq 0.
\end{cases}
\end{equation}
\end{lemma}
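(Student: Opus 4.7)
The plan is to differentiate $\dist(x_t, y_t)^2$ along the pair of characteristics $x_t:=\Psi^t_{v[\rho]}(x)$, $y_t:=\Psi^t_{v[\rho]}(y)$, substitute the integral representation \eqref{eqn:v-exp} of the velocity field, and then invoke Lemma \ref{L4.1} pointwise inside the resulting $\rho_t$-integral to extract the coercive lower bound. By Theorem \ref{thm:wp}, $\mathrm{supp}(\rho_t)\subset\overline{B_r(p)}$ for all $t\geq 0$, so $x_t$, $y_t$, and every $z$ appearing in the integration against $\rho_t$ belong to $\overline{B_r(p)}$. Since $r<\rc\leq\inj(M)/2$, any two such points are at distance strictly less than $\inj(M)$, so each Riemannian logarithm in play is well-defined and the squared distance function is smooth on $\overline{B_r(p)}\times\overline{B_r(p)}$. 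Uniqueness of the flow forces $x_t\neq y_t$ whenever $x\neq y$, ensuring $\dist(x_t,y_t)>0$ for all $t$.

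Next I would apply the chain rule together with \eqref{eqn:grad-d2} and the characteristic ODE \eqref{eqn:characteristics-general} to compute
\begin{equation*}
\tfrac{1}{2}\frac{\d}{\d t}\dist(x_t,y_t)^2 \;=\; -\log_{x_t}y_t\cdot v[\rho](x_t,t) \;-\; \log_{y_t}x_t\cdot v[\rho](y_t,t),
\end{equation*}
and then substitute \eqref{eqn:v-exp} to rewrite the right-hand side as
\begin{equation*}
-2\int_{\overline{B_r(p)}}\Bigl[\g'(\dist(x_t,z)^2)\log_{x_t}z\cdot\log_{x_t}y_t + \g'(\dist(y_t,z)^2)\log_{y_t}z\cdot\log_{y_t}x_t\Bigr]\d\rho_t(z).
\end{equation*}

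Finally, for each $z$ in the integration domain the triple $(x_t,y_t,z)$ satisfies the hypotheses of Lemma \ref{L4.1}, which therefore bounds the bracketed integrand from below, independently of $z$, by $\tfrac{C_\uc}{2}\,\g'(\dist(x_t,y_t)^2/4)\,\dist(x_t,y_t)^2$ with $C_\uc$ as in \eqref{eqn:Cmu} --- the two cases of Lemma \ref{L4.1} correspond exactly to the two branches defining $C_\uc$. Since $\rho_t$ is a probability measure, integrating this pointwise estimate and then using $\frac{\d}{\d t}\dist(x_t,y_t)^2 = 2\dist(x_t,y_t)\frac{\d}{\d t}\dist(x_t,y_t)$ produces the claimed inequality after dividing by $\dist(x_t,y_t)>0$. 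The main obstacle, namely the sharp three-point geometric comparison, has already been absorbed into Lemma \ref{L4.1}; the only remaining subtlety is justifying the differentiation of $\dist^2$ along the flow, which follows from the strict containment inside the cut locus ($r<\inj(M)/2$) together with the $C^1$-regularity of the characteristic map on $\mathrm{supp}(\rho_0)$.
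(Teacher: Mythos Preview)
Your proposal is correct and follows essentially the same approach as the paper: differentiate the squared distance along the characteristics via \eqref{eqn:grad-d2} and \eqref{eqn:characteristics-general}, insert the integral form of the velocity, apply Lemma \ref{L4.1} pointwise in the integral, and finish by the chain rule. The only cosmetic difference is that the paper writes the integral against $\rho_0$ via the push-forward (so the integration variable is $\Psi^t_{v[\rho]}(z)$), whereas you integrate directly against $\rho_t$; these are equivalent formulations.
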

\begin{proof}
By chain rule, \eqref{eqn:grad-d2}, and Definition \ref{defn:sol} of a weak solution (see also \eqref{eqn:characteristics-general}, we compute
\begin{align*}
&\frac{\d}{\d t}\dist(\Psi_{v[\rho]}^t(x), \Psi_{v[\rho]}^t(y))^2\\
&=-2\log_{\Psi_{v[\rho]}^t(x)}\Psi_{v[\rho]}^t(y)\cdot \frac{d}{dt} \Psi_{v[\rho]}^t(x)-2\log_{\Psi_{v[\rho]}^t(y)}\Psi_{v[\rho]}^t(x)\cdot\frac{d}{dt}\Psi_{v[\rho]}^t(y)
\\[5pt]
&=-2\log_{\Psi_{v[\rho]}^t(x)}\Psi_{v[\rho]}^t(y)\cdot v[\rho]( \Psi_{v[\rho]}^t(x),t)-2\log_{\Psi_{v[\rho]}^t(y)}\Psi_{v[\rho]}^t(x)\cdot v[\rho](\Psi_{v[\rho]}^t(y),t).
\end{align*}
By using \eqref{eqn:gradK-gen} and the definition of push-forward (see \eqref{eqn:push-forward}), we write the velocity field in \eqref{eqn:v-field} as
\[
v[\rho](\Psi^t_{v[\rho]}(x),t)=2\int_{\mathrm{supp}(\rho_0)}g'(\dist(\Psi^t_{v[\rho]}(x), \Psi^t_{v[\rho]}(z))^2)\log_{\Psi^t_{v[\rho]}(x)}{\Psi^t_{v[\rho]}(z)}\d\rho_0(z).
\]
We combine the two calculations above to get
\begin{align*}
&\frac{\d}{\d t}\dist(\Psi_{v[\rho]}^t(x), \Psi_{v[\rho]}^t(y))^2\\
&=-4\int_{\mathrm{supp}(\rho_0)}\bigg( g'(\dist(\Psi^t_{v[\rho]}(x), \Psi^t_{v[\rho]}(z))^2)\log_{\Psi^t_{v[\rho]}(x)}{\Psi^t_{v[\rho]}(z)}\cdot \log_{\Psi^t_{v[\rho]}(x)}{\Psi^t_{v[\rho]}(y)} \\
&\quad+g'(\dist(\Psi^t_{v[\rho]}(y), \Psi^t_{v[\rho]}(z))^2)\log_{\Psi^t_{v[\rho]}(y)}{\Psi^t_{v[\rho]}(z)}\cdot \log_{\Psi^t_{v[\rho]}(y)}{\Psi^t_{v[\rho]}(x)}\bigg) \d\rho_0(z).
\end{align*}
Now, we apply Lemma \ref{L4.1} for the points $\Psi^t_{v[\rho]}(x)$, $\Psi^t_{v[\rho]}(y)$ and $\Psi^t_{v[\rho]}(z)$. \\

\noindent (Case 1: $\uc>0$)
Use \eqref{D-4-0} to get
\begin{align*}
\frac{\d}{\d t}\dist(\Psi_{v[\rho]}^t(x), \Psi_{v[\rho]}^t(y))^2
\leq-2\sin(2\sqrt{\uc}(\rc-r))\g'(\dist(\Psi_{v[\rho]}^t(x), \Psi_{v[\rho]}^t(y))^2/4)\, \dist(\Psi_{v[\rho]}^t(x), \Psi_{v[\rho]}^t(y))^2,
\end{align*}
and by chain rule,
\begin{align*}
\frac{\d}{\d t}\dist(\Psi_{v[\rho]}^t(x), \Psi_{v[\rho]}^t(y))
\leq-\sin(2\sqrt{\uc}(\rc-r))\g'(\dist(\Psi_{v[\rho]}^t(x), \Psi_{v[\rho]}^t(y))^2/4)\, \dist(\Psi_{v[\rho]}^t(x), \Psi_{v[\rho]}^t(y)).
\end{align*}
\smallskip

\noindent (Case 2: $\uc\leq 0$)
Use \eqref{D-4-1} to get
\begin{align*}
\frac{\d}{\d t}\dist(\Psi_{v[\rho]}^t(x), \Psi_{v[\rho]}^t(y))^2
\leq -2g'(\dist(\Psi_{v[\rho]}^t(x), \Psi_{v[\rho]}^t(y))^2/4)\, \dist(\Psi_{v[\rho]}^t(x), \Psi_{v[\rho]}^t(y))^2.
\end{align*}
Then by chain rule, we find
\begin{align*}
\frac{\d}{\d t}\dist(\Psi_{v[\rho]}^t(x), \Psi_{v[\rho]}^t(y))
\leq -g'(\dist(\Psi_{v[\rho]}^t(x), \Psi_{v[\rho]}^t(y))^2/4)\, \dist(\Psi_{v[\rho]}^t(x), \Psi_{v[\rho]}^t(y)).
\end{align*}
This completes the proof of the lemma.
\end{proof}

Finally, we list an immediate result from ODE theory.
\begin{lemma}\label{L5.2}
Let a time dependent function $\theta$ satisfy the following ODE:
\begin{equation}
\label{eqn:diff-ineq}
\begin{cases}
\displaystyle\frac{\d}{\d t}\theta\leq -C\theta\g'(\theta^2/4),\\[5pt]
\theta(0)=\theta_0>0,
\end{cases}
\end{equation}
where $C$ is a positive constant and $g'(\theta^2/4)>0$ for all $\theta>0$. Then, we have $0<\theta(t)\leq \theta_0$ and 
\begin{align}\label{E-5}
\int_{\theta_0}^{\theta(t)}\frac{\d\xi}{\xi\g'(\xi^2/4)}\leq -Ct, \qquad \text{ for all } t\geq0.
\end{align}
\end{lemma}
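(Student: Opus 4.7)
The plan is to use separation of variables on the differential inequality, with a preliminary argument securing strict positivity of $\theta$.

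First, I would establish monotonicity and the upper bound $\theta(t)\leq\theta_0$. On any interval where $\theta>0$, the hypothesis $g'(\theta^2/4)>0$ and $C>0$ ensure the right-hand side of \eqref{eqn:diff-ineq} is non-positive, so $\theta$ is non-increasing on such an interval. Combined with $\theta(0)=\theta_0$ and continuity, this immediately yields $\theta(t)\leq\theta_0$ wherever $\theta$ is positive.

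Next, I would show $\theta(t)>0$ for all $t\geq 0$. The route is proof by contradiction: suppose $t^*:=\inf\{t>0:\theta(t)=0\}$ is finite. Then $\theta>0$ on $[0,t^*)$, so on this interval I can divide \eqref{eqn:diff-ineq} by the positive quantity $\theta(s)\g'(\theta(s)^2/4)$ and integrate from $0$ to $t<t^*$; the substitution $\xi=\theta(s)$ then produces
\[
\int_{\theta_0}^{\theta(t)}\frac{\d\xi}{\xi\g'(\xi^2/4)}\leq -Ct.
\]
As $t\uparrow t^*$, the right-hand side is bounded ($=-Ct^*$), but the left-hand side diverges to $-\infty$: since $\g'$ is locally Lipschitz on $[0,\theta_0^2/4]$ by the standing assumption (\textbf{K}) and hence bounded there, the integrand behaves at worst like $-1/(\xi \cdot \max\g')$ near $\xi=0$, producing a logarithmic divergence. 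This contradicts finiteness of $t^*$, so $\theta(t)>0$ for all $t\geq 0$.

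With positivity secured globally, the separation-of-variables computation above is valid on $[0,\infty)$, giving the desired bound \eqref{E-5} for every $t\geq 0$. The main (minor) obstacle is the strict positivity step, since it implicitly requires that the integral $\int_0^{\theta_0}\d\xi/(\xi g'(\xi^2/4))$ diverges at the lower endpoint; this is inherited from the local Lipschitz regularity of $g'$ in (\textbf{K}), which in particular makes $g'$ bounded on compacts. Everything else is a routine change-of-variables argument.
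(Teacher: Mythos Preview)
Your proposal is correct and follows the same separation-of-variables route as the paper, which simply states that the result is ``immediate'' by integrating the differential inequality. Your argument is in fact more complete: the paper does not justify the strict positivity $\theta(t)>0$, whereas you supply the blow-up-of-the-integral argument that rules out a finite vanishing time.
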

\begin{proof}
The proof is immediate. In particular, one can directly obtain \eqref{E-5} by integrating the given differential inequality.
\end{proof}

Define the diameter of $\rho_t$ as
\begin{align}\label{E-5-1}
\Delta(t):=\mathrm{diam}(\mathrm{supp}(\rho_t))=\max_{x, y\in \mathrm{supp}(\rho_t)}\dist(x, y).
\end{align}
The maximum exists since $\dist(\cdot, \cdot)$ is a continuous function on the compact set $\mathrm{supp}(\rho_t)\times\mathrm{supp}(\rho_t)$. The convergence rate of $\Delta(t)$ as $t\to\infty$ is given by the following theorem.

\begin{theorem}\label{thm:rate-conv}
Assume $M\in \calM_{\uc}$ for some $\uc\in\bbr$ and $K$ satisfy (\textbf{M}) and (\textbf{Kc}), respectively. Take a weak solution $\rho_t$ of \eqref{eqn:model} with initial data $\mathrm{supp}(\rho_0)\subset \overline{B_r(p)}$ for some $p\in M$ and $0<r<\rc$. 
Then $\Delta(t)$ defined in \eqref{E-5-1} satisfies the following inequality in integral form:
\begin{align}\label{E-6}
\int_{\Delta(0)}^{\Delta(t)}\frac{\d\xi}{\xi\g'(\xi^2/4)}\leq -C_\uc t, \qquad \textrm{ for all } t \geq 0,
\end{align}
where $C_\uc$ is given by \eqref{eqn:Cmu}.
Furthermore, we have
\begin{equation}
\label{eqn:limz}
\lim_{t\to\infty}\Delta(t)=0.
\end{equation}
\end{theorem}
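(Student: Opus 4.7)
The natural strategy is to lift the pairwise decay from Lemma \ref{flowest} to the diameter, using the integrated form provided by Lemma \ref{L5.2}. Since $\rho_t=\Psi^t_{v[\rho]}\#\rho_0$ by Definition \ref{defn:sol} and the flow is a continuous injection on $\mathrm{supp}(\rho_0)$, one has $\mathrm{supp}(\rho_t)=\Psi^t_{v[\rho]}(\mathrm{supp}(\rho_0))$, hence
\begin{equation*}
\Delta(t)=\max_{x_0,y_0\in\mathrm{supp}(\rho_0)}\dist\!\bigl(\Psi^t_{v[\rho]}(x_0),\Psi^t_{v[\rho]}(y_0)\bigr).
\end{equation*}
For each fixed pair $x_0\neq y_0$ in $\mathrm{supp}(\rho_0)$, I would set $\theta(t):=\dist(\Psi^t_{v[\rho]}(x_0),\Psi^t_{v[\rho]}(y_0))$ and observe that Lemma \ref{flowest} yields exactly the differential inequality required by the hypothesis of Lemma \ref{L5.2} with $C=C_\mu$. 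The strong attractiveness of $K$, together with $\theta\leq 2\rc\leq\inj(M)$, guarantees $g'(\theta^2/4)>0$ whenever $\theta>0$. Consequently each such $\theta$ is non-increasing, so $\Delta$ (a supremum of non-increasing functions) is itself non-increasing, and the integrated bound \eqref{E-5} holds for every pair.

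Next, I would fix $t\geq 0$ and use compactness of $\mathrm{supp}(\rho_0)$ together with continuity of the flow to pick a pair $(x_0^*,y_0^*)$ realizing the maximum defining $\Delta(t)$. Writing $\theta^*$ for this pair's distance, one has $\theta^*(t)=\Delta(t)$ and, by the very definition of $\Delta(0)$, $\theta^*(0)\leq\Delta(0)$. Using that the integrand $1/(\xi g'(\xi^2/4))$ is strictly positive on $(0,2\rc)$, the identity
\begin{equation*}
\int_{\Delta(0)}^{\Delta(t)}\frac{\d\xi}{\xi g'(\xi^2/4)}=\int_{\theta^*(0)}^{\theta^*(t)}\frac{\d\xi}{\xi g'(\xi^2/4)}-\int_{\theta^*(0)}^{\Delta(0)}\frac{\d\xi}{\xi g'(\xi^2/4)}
\end{equation*}
shows that the left-hand side is at most $\int_{\theta^*(0)}^{\theta^*(t)}\frac{\d\xi}{\xi g'(\xi^2/4)}$, which by Lemma \ref{L5.2} is at most $-C_\mu t$. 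This is exactly \eqref{E-6}.

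Finally, the monotone bounded function $\Delta$ converges to some $L\in[0,\Delta(0)]$. If $L>0$, then continuity and positivity of $g'$ on $(0,\inj(M))$ make $1/(\xi g'(\xi^2/4))$ bounded on $[L,\Delta(0)]$, so the left-hand side of \eqref{E-6} stays bounded in $t$, contradicting $-C_\mu t\to-\infty$; note $C_\mu>0$ since $r<\rc$ forces $0<2\sqrt{\mu}(\rc-r)<\pi/2$ when $\mu>0$, and $C_\mu=1$ otherwise. Hence $L=0$, giving \eqref{eqn:limz}. The only real subtlety is the passage from a pairwise ODE estimate to a statement about the diameter of the support when the maximizing pair may vary with $t$; this is handled cleanly by picking the maximizer at the given time $t$ and exploiting positivity of the integrand to enlarge the lower limit from $\theta^*(0)$ to $\Delta(0)$, without needing any envelope-theorem argument.
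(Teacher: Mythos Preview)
Your proposal is correct and follows essentially the same approach as the paper: fix a time, select a pair in $\mathrm{supp}(\rho_0)$ realizing the diameter at that time, apply Lemma~\ref{flowest} and Lemma~\ref{L5.2} to that pair, and then enlarge the lower integration limit from $\theta^*(0)$ to $\Delta(0)$ using positivity of the integrand. Your handling of \eqref{eqn:limz} via the monotonicity of $\Delta$ (as a supremum of non-increasing functions) is slightly cleaner than the paper's $\limsup$ computation, but the two arguments are equivalent in substance.
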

\begin{proof}
From the definition of the diameter $\Delta$, we have 
\[
\Delta(t)=\max_{x, y\in \mathrm{supp}(\rho_t)}\dist(x, y)=\max_{x, y\in \mathrm{supp}(\rho_0)}\dist(\Psi^t_{v[\rho]}(x), \Psi^t_{v[\rho]}(y)).
\]
Fix an arbitrary time $\T>0$ and two points $\bar{x},\bar{y}\in\mathrm{supp}(\rho_0)$ such that
\[
\Delta(\T)=\dist(\Psi^{\T}_{v[\rho]}(\bar{x}), \Psi^{\T}_{v[\rho]}(\bar{y})).
\]
Set
\[
\theta(t): = \dist(\Psi^t_{v[\rho]}(\bar{x}), \Psi^t_{v[\rho]}(\bar{y})), \qquad \textrm{ for } t \geq 0.
\]
In particular, $\theta(\T) = \Delta(\T)$ and $\theta(0) = \dist(\bar{x},\bar{y})$.

By Lemma \ref{flowest}, $\theta(t)$ satisfies the differential inequality \eqref{eqn:diff-ineq}, with initial value $\theta_0 = d(\bar{x},\bar{y})$.  By Lemma \ref{L5.2}, it then holds that $0<\theta(t) \leq \dist(\bar{x}, \bar{y})$ and
\[
\int_{\dist(\bar{x}, \bar{y})}^{\theta(t)}\frac{\d\xi}{\xi g'(\xi^2/4)}\leq -C_\uc t, \qquad \text{ for all } t \geq 0.
\]
At $t = \T$ one has $\Delta(\T) \leq \dist(\bar{x}, \bar{y})$ and
\begin{equation}
\label{eqn:ineq-tbar}
\int_{\dist(\bar{x}, \bar{y})}^{\Delta(\T)}\frac{\d\xi}{\xi g'(\xi^2/4)}\leq -C_\uc \T.
\end{equation}
Now use  
\[
\Delta(\T) \leq \dist(\bar{x}, \bar{y}) \leq \max_{x, y\in \mathrm{supp}(\rho_0)}\dist(x, y) = \Delta(0),
\]
to write
\begin{equation}
\label{eqn:ineq-Dz}
\int_{\Delta(0)}^{\Delta(\T)}\frac{\d\xi}{\xi\g'(\xi^2/4)}\leq \int_{\dist(\bar{x}, \bar{y})}^{\Delta(\T)}\frac{\d\xi}{\xi\g'(\xi^2/4)},
\end{equation}
where we also used that the integrand is positive. Finally, combine \eqref{eqn:ineq-tbar} and \eqref{eqn:ineq-Dz} to arrive at
\begin{equation*}
\int_{\Delta(0)}^{\Delta(\T)}\frac{\d\xi}{\xi\g'(\xi^2/4)}\leq -C_\uc \T.
\end{equation*}
The inequality above holds for an arbitrary $\T>0$, and this shows \eqref{E-6}.

To show \eqref{eqn:limz} we write
\begin{align}
\int_{\Delta(0)}^{\limsup_{t\to\infty}\Delta(t)}\frac{\d\xi}{\xi g'(\xi^2/4)}&=\int_{\Delta(0)}^{\lim_{t\to\infty}\sup_{\tau\in[t, \infty)}\Delta(\tau)}\frac{\d\xi}{\xi g'(\xi^2/4)} \nonumber \\
&=\lim_{t\to\infty}\int_{\Delta(0)}^{\sup_{\tau\in[t, \infty)}\Delta(\tau)}\frac{\d\xi}{\xi g'(\xi^2/4)} \nonumber \\
&=\lim_{t\to\infty}\left(\sup_{\tau\in[t, \infty)}\int_{\Delta(0)}^{\Delta(\tau)}\frac{\d\xi}{\xi g'(\xi^2/4)}\right), \label{eqn:limsup}
\end{align}
where the last equal sign comes from the fact that $\Delta(t)$ is non-negative and that the function $F(x):=\int_{\Delta(0)}^x \frac{d\xi}{\xi g'(\xi^2/4)}$ is non-decreasing, by which it holds that
\[
F \Bigl( \sup_{\tau\in[t, \infty)} \Delta(\tau) \Bigr) = \sup_{\tau\in[t, \infty)} F(\Delta(\tau)).
\]

By using \eqref{E-6} in \eqref{eqn:limsup}, we then find
\begin{align*}
\int_{\Delta(0)}^{\limsup_{t\to\infty}\Delta(t)}\frac{\d\xi}{\xi g'(\xi^2/4)} 
&\leq\lim_{t\to\infty}\sup_{\tau\in[t, \infty)}(-C_\lc \tau) \\ &= \lim_{t\to\infty}(-C_\lc t) \\ &=-\infty.
\end{align*}

Since the function $\frac{1}{\xi g'(\xi^2/4)}$ has no singularity on $\xi\in (0, \Delta(0)]$, it implies that 
\[ 
\limsup_{t\to\infty}\Delta(t)=0.
\]
Indeed, if $\limsup_{t\to\infty}\Delta(t)>0$, then 
\[
\int_{\Delta(0)}^{\limsup_{t\to\infty}\Delta(t)}\frac{\d\xi}{\xi g'(\xi^2/4)} > -\infty.
\]
Since $\Delta(t)\geq0$, we can now conclude \eqref{eqn:limz}.
\end{proof}

An explicit rate of  convergence can be computed for certain interaction potentials, as given by the following corollary.
\begin{corollary}\label{C4.1}
Under the same assumptions as in Theorem \ref{thm:rate-conv}, assume in addition that
\begin{equation}
\label{eqn:gp}
\g'(\theta^2)\geq \alpha\theta^{\beta-2},\qquad \textrm{ for all } \theta\in(0, 2\rc),
\end{equation}
for some $\alpha>0$ and $\beta\geq2$. Then, we can express the convergence rate of $\Delta(t)$ as follows.
\[
\Delta(t)\lesssim\begin{cases}
\displaystyle O\left(e^{-ct}\right),&\qquad\text{if }\beta=2,\\[5pt]
\displaystyle O\left(t^{-\frac{1}{\beta-2}}\right),&\qquad\text{if }\beta>2,
\end{cases}\qquad\text{as}\quad t\to\infty,
\]
for some $c>0$.
\end{corollary}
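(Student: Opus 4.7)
The plan is to start from the integral estimate
\[
\int_{\Delta(0)}^{\Delta(t)}\frac{\d\xi}{\xi g'(\xi^2/4)}\leq -C_\uc t
\]
furnished by Theorem \ref{thm:rate-conv}, flip the orientation so that the integrand is being bounded from above on a positive-length interval, and then substitute the lower bound on $g'$ from hypothesis \eqref{eqn:gp} to reduce the problem to an elementary integral that can be inverted for $\Delta(t)$.

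More precisely, first I would observe that since $\Delta(t) \leq \Delta(0) \leq 2r < 2\rc$, the whole interval of integration $[\Delta(t),\Delta(0)]$ lies in $(0,2\rc)$, so that we may apply \eqref{eqn:gp} with $\theta = \xi/2$ to obtain
\[
g'(\xi^2/4) \geq \alpha\, (\xi/2)^{\beta-2} = \alpha\, 2^{2-\beta}\, \xi^{\beta-2},
\qquad \xi \in (0,4\rc).
\]
Rewriting Theorem \ref{thm:rate-conv} in the form
\[
C_\uc t \leq \int_{\Delta(t)}^{\Delta(0)}\frac{\d\xi}{\xi g'(\xi^2/4)},
\]
the pointwise upper bound $1/(\xi g'(\xi^2/4)) \leq 2^{\beta-2}/(\alpha\,\xi^{\beta-1})$ then yields
\[
C_\uc t \;\leq\; \frac{2^{\beta-2}}{\alpha}\int_{\Delta(t)}^{\Delta(0)}\frac{\d\xi}{\xi^{\beta-1}}.
\]

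Next I would split on $\beta$. For $\beta=2$ the right-hand side equals $\alpha^{-1}\log(\Delta(0)/\Delta(t))$, so rearranging gives the exponential decay
\[
\Delta(t) \leq \Delta(0)\, e^{-\alpha C_\uc t},
\]
which is the $\beta=2$ case with $c = \alpha C_\uc$. For $\beta>2$ the integral evaluates to
\[
\frac{2^{\beta-2}}{\alpha(\beta-2)}\left(\frac{1}{\Delta(t)^{\beta-2}}-\frac{1}{\Delta(0)^{\beta-2}}\right),
\]
and isolating $\Delta(t)$ from
\[
\frac{1}{\Delta(t)^{\beta-2}} \geq \frac{\alpha(\beta-2)C_\uc}{2^{\beta-2}}\, t + \frac{1}{\Delta(0)^{\beta-2}}
\]
yields $\Delta(t) \lesssim t^{-1/(\beta-2)}$ as $t \to \infty$.

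There is no real obstacle in this argument; the substance is already in Theorem \ref{thm:rate-conv}, and the only thing to verify carefully is that the range of $\xi$ over which we invoke \eqref{eqn:gp} stays inside $(0,2\rc)$, which it does since $\Delta(0)\leq 2r<2\rc$ (so that $\xi/2 < \rc < 2\rc$). The rest is a direct computation.
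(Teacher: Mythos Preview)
Your proposal is correct and follows essentially the same approach as the paper's proof: both start from the integral inequality of Theorem~\ref{thm:rate-conv}, substitute the lower bound \eqref{eqn:gp} with $\theta=\xi/2$ to reduce to $\int \xi^{1-\beta}\,\d\xi$, and then integrate explicitly in the two cases $\beta=2$ and $\beta>2$. Your added remark checking that the integration range lies in the domain of validity of \eqref{eqn:gp} is a nice touch that the paper leaves implicit.
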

\begin{proof}
We substitute the given condition into \eqref{E-6} to get
\[
\int_{\Delta(0)}^{\Delta(t)}\frac{\d\xi}{\alpha \xi(\xi/2)^{\beta-2}}\leq \int_{\Delta(0)}^{\Delta(t)}\frac{\d\xi}{\xi\g'(\xi^2/4)}\leq -C_\uc t.
\]
This yields
\[
\int_{\Delta(0)}^{\Delta(t)}\frac{\d\xi}{\xi^{\beta-1}}\leq -\alpha 2^{2-\beta}C_\uc  t.
\]
If $\beta=2$, then by direct integration we get
\begin{equation}
\label{eqn:beta2}
\Delta(t)\leq \Delta(0) \exp(-\alpha C_\uc t).
\end{equation}
If $\beta>2$, then direct integration yields
\[
\frac{1}{2-\beta}\left(
\frac{1}{\Delta(t)^{\beta-2}}-\frac{1}{\Delta(0)^{\beta-2}}
\right)\leq  -\alpha 2^{2-\beta}C_\uc  t,
\]
which after some further trivial algebra leads to
\begin{equation}
\label{eqn:betag2}
\Delta(t)\leq \Delta(0)\left(1+\alpha(\beta-2) (\Delta(0)/2)^{\beta-2}C_\uc t\right)^{-\frac{1}{\beta-2}}.
\end{equation}
\end{proof}

\begin{example} (Power-law potential)
\label{ex:power-law} 
Consider an attractive interaction potential in power-law form:
\begin{equation}
\label{eqn:p-law}
g(\theta^2)=\frac{1}{\beta}\theta^\beta, \qquad \beta \geq 2.
\end{equation}
Note that $g'(\theta^2)=\frac{1}{2} \theta^{\beta-2}$, so one can use Corollary \ref{C4.1} with $\alpha = \frac{1}{2}$. For $\beta=2$ (quadratic potential) $\Delta(t)$ decays exponentially in time (see \eqref{eqn:beta2}), while for $\beta>2$ it decays algebraically at the rate $\frac{1}{t^{\beta-2}}$ (see \eqref{eqn:betag2}). These decay rates are demonstrated numerically in Section \ref{sect:numerics} for $M=SO(3)$, the $3$-dimensional special orthogonal group.

Potentials in power-law form have been considered in many works on the interaction model \eqref{eqn:model} in Euclidean spaces \cite{BaCaLaRa2013, FeHu13, FeHuKo11, KoSuUmBe2011, Brecht_etal2011}, as well as for the model set up on Riemannian manifolds \cite{FeHaPa2021, FeZh2019}.  In particular, attractive-repulsive interaction potentials were shown to lead to complex equilibrium configurations, supported on sets of various dimensions \cite{Balague_etalARMA, KoSuUmBe2011, Brecht_etal2011}. Also, the existence and characterization of minimizers of the interaction energy with interactions in power-law form have been an active research topic in recent years \cite{ChFeTo2015, SiSlTo2015, CaCaPa2015}.
\end{example}

\begin{remark}
Consider a potential $K$ made of two parts: $K(x, y)=K_1(x, y)+K_2(x, y)$, with $K_1(x, y)=\g_1(\dist(x, y)^2)$ and $K_2(x, y)=\g_2(\dist(x, y)^2)$, such that $g_1$ and $g_2$ satisfy
\[
\g_1'(\theta^2)\leq \gamma \g_2'(\theta^2), \qquad \textrm{ for all } 0\leq \theta\leq\rc,
\]
for some constant $\gamma>0$. The relationship above states that the attraction modelled by $K_2$ is stronger than that of $K_1$. Then we have
\[
 \int_{\Delta(0)}^{\Delta(t)}\frac{\d\xi}{\xi \g'(\xi^2/4)}=\int_{\Delta(0)}^{\Delta(t)}\frac{\d \xi}{\xi \g_1'(\xi^2/4)+\xi \g_2'(\xi^2/4)}\leq\frac{1}{1+\gamma}\int_{\Delta(0)}^{\Delta(t)}\frac{\d\xi}{\xi g_2'(\xi^2/4)},
\]
where $g = g_1+g_2$.

Consequently, the decay rate of $\Delta(t)$ is determined by the rate of the stronger potential $K_2$. For example, if $\g(\theta^2)=\frac{1}{2}\theta^2+\frac{1}{4}\theta^4$, then we can set $g_1(\theta^2) = \frac{1}{4}\theta^4$, $g_2(\theta^2)=\frac{1}{2}\theta^2$, and conclude that $\Delta(t)$ converges to zero exponentially fast. 
\end{remark}

\begin{remark} 
\label{remark:compare}
We compare here Theorem \ref{thm:rate-conv} above with the asymptotic consensus in \cite[Theorem 5.18]{FeHaPa2021}, the most general result available prior to the present work.

\noindent(i) {\em Classes of manifolds:} In \cite[Theorem 5.18]{FeHaPa2021} the authors only considered manifolds $M$ of constant sectional curvature, while Theorem \ref{thm:rate-conv} applies to general manifolds of bounded curvature.  
\smallskip

\noindent(ii) {\em Assumptions on the potential:} By \cite[Proposition 5.17]{FeHaPa2021}, an initial density supported in $\overline{B_r(p)}$ achieves asymptotic consensus provided $g$ satisfies the following assumptions: $g' \geq 0$, $g'(0) \geq \alpha$ for a positive constant $\alpha$, $g'$ is $C^1$, and
\[
\begin{cases}
\displaystyle\frac{\theta}{\sin(\theta)}g'(\theta^2)&\text{ is non-decreasing when } \secc = 1, \\[10pt]
g'(\theta^2)&\text{ is non-decreasing when } \secc =0,\\[3pt]
\displaystyle\frac{\theta}{\sinh(\theta)}g'(\theta^2)&\text{ is non-decreasing when } \secc = -1,
\end{cases}
\]
for $0<\theta<2r$, with $r< \mathrm{conv }(M)$ when $\secc = -1$ or $0$ and $r< \mathrm{min} \{\mathrm{conv}(M),\frac{\pi}{4}\}$ when $\secc = 1$ (here,  
$\mathrm{conv }(M)$ denotes the convexity radius of $M$).

In contrast, in Theorem \ref{thm:rate-conv} we have only assumed that $g$ satisfies $g'>0$, and 
\[
\begin{cases}
\displaystyle\frac{\theta}{\sin(\sqrt{\mu}\theta)}g'(\theta^2)&\text{ is non-decreasing when }\mu>0,\\[10pt]
g'(\theta^2)&\text{ is non-decreasing when }\mu\leq 0,
\end{cases}
\]
for $0<\theta<2 \rc$. In particular, compared to \cite[Proposition 5.17]{FeHaPa2021} we dropped the very restrictive condition $g'(0)\geq\alpha>0$, which rules out for instance the important class of power-law potentials discussed in Example \ref{ex:power-law}. Also,  for manifolds of negative curvature, the assumption in Theorem \ref{thm:rate-conv} on the monotonicity of $g'$ is weaker than the corresponding assumption in \cite[Proposition 5.17]{FeHaPa2021} (as $\sinh(\theta)/\theta$ is non-decreasing).
\smallskip

\noindent(iii) {\em Rate of convergence:} There is no argument in \cite[Theorem 5.18]{FeHaPa2021} on the convergence rate of the diameter $\Delta(t)$. 
\end{remark}


\section{Weakly attractive potentials: asymptotic behaviour}
\label{sect:weak-att}
In this section we investigate the asymptotic behaviour of solutions to \eqref{eqn:model} when the strong attraction condition \eqref{eqn:Katt-s} is replaced by a weaker one. Consider the following definition.
\begin{definition}[Weakly attractive potential]\label{defn:weak-pot}
An interaction potential $K$ is called {\em weakly attractive} if it satisfies assumption (\textbf{K}), with \eqref{eqn:Katt} in the form
\begin{align}\label{eqn:Katt-w}
\begin{cases}
\g'(r^2)=0,&\qquad \text{ for all } 0\leq r\leq \re,\\[2pt]
\g'(r^2)>0,&\qquad \text{ for all } \re<r<\infty,
\end{cases}
\end{align}
for some $0<\re < \frac{r_w}{2}$.
\end{definition}
Equation \eqref{eqn:Katt-w} implies that two points within distance $\re$ do not interact with each other,  while points that are further than $\re$ apart feel a non-trivial attractive interaction. 

To apply Proposition \ref{prop:lasalle}, we express $\calE_{r, p}$ introduced in \eqref{SE}. The lemma below is the analogue of Lemma \ref{steady} for weakly attractive potentials. 
\begin{lemma}\label{L5.1}
Assume that $M\in\calM_\uc$ with $\uc\in\bbr$ satisfies (\textbf{M}) and $K$ is a weakly attractive potential. Then, for all $0<r<\rw$ and $p\in M$, we have
\[
\calE_{r,p}=\{\rho\in \calP(\overline{B_r(p)}):\mathrm{diam}(\mathrm{supp}(\rho))\leq \re \}.
\]
\end{lemma}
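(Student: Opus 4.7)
My plan is to prove the set equality by two inclusions. The reverse inclusion ``$\supseteq$'' is immediate: if $\rho \in \calP(\overline{B_r(p)})$ has $\mathrm{diam}(\mathrm{supp}(\rho)) \leq \re$, then $\dist(x, y) \leq \re$ for all $x, y \in \mathrm{supp}(\rho)$, so $\g'(\dist(x, y)^2) = 0$ by \eqref{eqn:Katt-w}; the expression \eqref{eqn:v-exp} then yields $\V[\rho] \equiv 0$ on $\mathrm{supp}(\rho)$, hence $\nabla_{W_2} E[\rho] = -\nabla \cdot (\rho \V[\rho]) \equiv 0$ and $\rho \in \calE_{r, p}$.

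For the forward inclusion ``$\subseteq$'', the plan is to adapt the extremal point argument used in the proof of Lemma \ref{steady}. Let $\rho \in \calE_{r, p}$. By Lemma \ref{L2.2} combined with continuity of $\V[\rho]$ on $\mathrm{supp}(\rho)$, we have $\V[\rho](x) = 0$ for every $x \in \mathrm{supp}(\rho)$. The general mechanism I will use is: choose an extremal $\tilde{x} \in \mathrm{supp}(\rho)$ and an auxiliary point $w$ for which the cosine comparison estimates of Lemma \ref{L2.3} deliver a bound $\log_{\tilde{x}} z \cdot \log_{\tilde{x}} w \geq c\,\dist(\tilde{x}, z)^2$ with $c > 0$ uniform in $z \in \mathrm{supp}(\rho)$. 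Pairing $\V[\rho](\tilde{x}) = 0$ against $\log_{\tilde{x}} w$ then forces the non-negative integral $\int \g'(\dist(\tilde{x}, z)^2)\dist(\tilde{x}, z)^2\,\d\rho(z) = 0$; since the integrand is continuous on the compact set $\mathrm{supp}(\rho)$, it vanishes pointwise there, and \eqref{eqn:Katt-w} then yields $\dist(\tilde{x}, z) \leq \re$ for every $z \in \mathrm{supp}(\rho)$.

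I will deploy this mechanism in two stages. In Stage 1, let $\tilde{x}_0 \in \mathrm{supp}(\rho)$ realize $R := \max_{z \in \mathrm{supp}(\rho)} \dist(z, p) \leq r$, and take $w = p$. This is exactly the configuration of Lemma \ref{steady}, whose cosine-law computation goes through verbatim (using $R \leq r < \rw \leq \pi/(2\sqrt{\uc})$ when $\uc > 0$), producing $\dist(\tilde{x}_0, z) \leq \re$ for all $z$; the triangle inequality then gives $\mathrm{diam}(\mathrm{supp}(\rho)) \leq 2\re < \rw$. In Stage 2, set $D := \mathrm{diam}(\mathrm{supp}(\rho))$ (if $D = 0$ we are done, so assume $D > 0$) and pick $\tilde{x}, \tilde{y} \in \mathrm{supp}(\rho)$ with $\dist(\tilde{x}, \tilde{y}) = D$, taking $w = \tilde{y}$. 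Apply Lemma \ref{L2.3} to the triangle $\tilde{y}\tilde{x}z$ together with $\dist(\tilde{y}, z) \leq D$, mimicking the algebra of Lemma \ref{steady} with $R \mapsto D$ and $p \mapsto \tilde{y}$: for $\uc > 0$ this gives $\cos\angle(\tilde{y}\tilde{x}z) \geq \cot(\sqrt{\uc} D)\tan(\sqrt{\uc}\dist(\tilde{x}, z)/2)$, and for $\uc \leq 0$ the Euclidean-type inequality \eqref{B-4-4} gives $\cos\angle(\tilde{y}\tilde{x}z) \geq \dist(\tilde{x}, z)/(2D)$. The mechanism then delivers $\dist(\tilde{x}, z) \leq \re$ for every $z \in \mathrm{supp}(\rho)$, and in particular $D = \dist(\tilde{x}, \tilde{y}) \leq \re$.

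The main obstacle is ensuring positivity of $\cot(\sqrt{\uc} D)$ in Stage 2 when $\uc > 0$: the a priori bound $D \leq 2r$ could place $\sqrt{\uc} D$ inside $[\pi/2, \pi)$, where the cosine comparison becomes useless. Stage 1 is introduced precisely to bootstrap $D \leq 2\re$; since $\re < \rw/2 \leq \pi/(4\sqrt{\uc})$ by Definition \ref{defn:weak-pot}, this forces $\sqrt{\uc} D < \pi/2$ and secures the required positivity. For $\uc \leq 0$, \eqref{B-4-4} gives the cosine lower bound unconditionally, so Stage 1 is strictly speaking unnecessary in that case, but handling both cases in the same two-stage framework is the cleanest exposition.
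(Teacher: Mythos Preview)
Your proof is correct and follows essentially the same two-stage strategy as the paper's proof: first use the extremal point $\tilde{x}_0$ relative to $p$ to get $\mathrm{diam}(\mathrm{supp}(\rho)) \leq 2\re < \rw$, then re-center at a diameter-realizing pair $\tilde{x}, \tilde{y}$ to conclude the diameter is at most $\re$. The only minor deviation is in the reverse inclusion: you argue directly that $\V[\rho] \equiv 0$ on $\mathrm{supp}(\rho)$ via \eqref{eqn:v-exp}, whereas the paper observes that any such $\rho$ is a global minimizer of the energy $E$ (since $g(\dist(x,y)^2) = g(0)$ on $\mathrm{supp}(\rho)\times\mathrm{supp}(\rho)$) and hence a critical point; both arguments are valid and of comparable length.
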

\begin{proof}
The proof is very similar to that of Lemma \ref{steady}, we only sketch it here.

\noindent \underline{\em Part 1}: $\calE_{r,p}\subseteq\{\rho\in \calP(\overline{B_r(p)}):\mathrm{diam}(\mathrm{supp}(\rho))\leq \re\}$. Let $\rho$ be a steady state of system \eqref{eqn:model}, and consider the notations from the proof of Lemma \ref{steady}. Specifically, $R:=\max_{x\in \supp}\dist(x, p)$, with $0\leq R\leq r<\rw$, and $\tilde{x}\in \supp$ satisfies $\dist(\tilde{x}, p)=R$. Following the argument made in the proof of Lemma \ref{steady}, one can then show \eqref{eqn:int-zero}.
Hence, we have $g'(\dist(\tilde{x}, z)^2)=0$ or $z=\tilde{x}$ for $z \in \supp$, a.e. with respect to $\rho$. This implies that
\[
\dist(\tilde{x}, z)\leq \re, \qquad \text{ for }z \in \supp, \text{ a.e. with respect to }\rho.
\]

On the other hand, for any $x, y\in \supp$, we have
\[
\dist(x, y)\leq \dist(x, \tilde{x})+\dist(y, \tilde{x})\leq 2\re < r_w.
\]
Take two points $y_1$ and $y_2$ in $\supp$ such that
\[
\dist(y_1, y_2)=D:=\max_{x, y\in \supp}\dist(x, y) < r_w.
\]
Therefore, $ \supp \subset \overline{B_D(y_1)}$, with $D<\rw$. Then, from a similar argument that we used to prove \eqref{eqn:int-zero}, we get
\[
\int_{\supp}  g'(\dist(y_1, z)^2)\dist(y_1, z)^2 \d\rho(z)=0,
\] 
which implies 
\[
\dist(y_1, z)\leq \re, \qquad \text{ for } z \in \supp, \text{ a.e. with respect to }\rho. 
\]
From here we conclude
\[
\mathrm{diam}(\supp) = \dist(y_1, y_2)  \leq \re.
\]
\medskip

\noindent \underline{\em Part 2}: $\calE_{r,p}\supseteq\{\rho\in \calP(\overline{B_r(p)}):\mathrm{diam}(\mathrm{supp}(\rho))\leq \re\}$. Also by a similar argument used in the proof of Lemma \ref{steady}, it can be easily inferred that provided $\rho$ satisfies
\[
\mathrm{diam}(\mathrm{supp}(\rho))\leq \re,
\]
then $\rho$ is a global minimizer of the energy, and hence a critical point.
\end{proof}

The following lemma is used in proving the consensus result.
\begin{lemma}[\cite{ambrosio2013user}, Corollary 2.22]\label{wageod}
Let $(\rho_\tau)$ be a curve in $\mathcal{P}_2(M)$, $0\leq \tau \leq 1$. Then the following two statements are equivalent:\\[2pt]
\noindent(i) $(\rho_\tau)$ is a geodesic in $(\mathcal{P}_2(M), W_2)$ that joins $\rho_0$ and $\rho_1$, \\[3pt]
\noindent(ii) there exists a plan $\gamma\in \mathcal{P}(TM)$ ($TM$ denotes the tangent bundle of $M$) such that
\[
\int_{TM} \|v\|^2 \d\gamma(x, v)=W_2^2(\rho_0, \rho_1),\qquad (\mathrm{Exp}(\tau))_\#\gamma=\rho_\tau, \quad \text{ for all } 0 \leq \tau \leq 1,
\]
where $\mathrm{Exp}(\tau): TM \to M$ is the map $(x, v)\mapsto \exp_x(\tau v)$.
\end{lemma}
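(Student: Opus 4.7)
The plan is to recognize this as a Riemannian-manifold version of the displacement interpolation characterization of $W_2$-geodesics and prove the two implications separately. The heart of the argument is that plans $\gamma \in \calP(TM)$ encoding initial positions and tangent velocities are in natural correspondence with constant-speed optimal couplings on $M$ via the exponential map.

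For the direction (ii) $\Rightarrow$ (i), starting from a plan $\gamma$ satisfying the two stated conditions, I would define for each $0 \leq s \leq t \leq 1$ the coupling $\gamma_{s,t} := (\mathrm{Exp}(s), \mathrm{Exp}(t))_{\#} \gamma \in \Pi(\rho_s, \rho_t)$. Using the elementary Riemannian estimate $\dist(\exp_x(sv), \exp_x(tv)) \leq (t-s)\|v\|$ for any $v \in T_x M$, one obtains
\[
W_2(\rho_s, \rho_t)^2 \leq \int_{TM} \dist(\exp_x(sv), \exp_x(tv))^2 \, \d\gamma(x,v) \leq (t-s)^2 \int_{TM} \|v\|^2 \, \d\gamma = (t-s)^2 W_2(\rho_0,\rho_1)^2.
\]
Since this inequality must be consistent with the triangle inequality $W_2(\rho_0,\rho_1) \leq W_2(\rho_0,\rho_s) + W_2(\rho_s,\rho_t) + W_2(\rho_t,\rho_1)$, equality is forced throughout, giving the constant-speed relation $W_2(\rho_s,\rho_t) = (t-s) W_2(\rho_0,\rho_1)$ that characterizes a geodesic in $(\calP_2(M), W_2)$.

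For the direction (i) $\Rightarrow$ (ii), I would begin with an optimal coupling $\pi \in \Pi(\rho_0,\rho_1)$ realizing $W_2(\rho_0,\rho_1)$ and lift it to $TM$ by defining $\gamma := \Lambda_{\#} \pi$, where $\Lambda(x,y) := (x, \log_x y)$. The condition on the integral is immediate: $\int_{TM} \|v\|^2 \d\gamma = \int_{M\times M} \dist(x,y)^2 \d\pi = W_2^2(\rho_0,\rho_1)$. The first marginal condition $(\mathrm{Exp}(0))_{\#}\gamma = \rho_0$ is automatic, and $(\mathrm{Exp}(1))_{\#}\gamma = \rho_1$ follows because $\exp_x(\log_x y) = y$ wherever $\log_x y$ is defined. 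To check $(\mathrm{Exp}(\tau))_{\#}\gamma = \rho_\tau$ for intermediate $\tau$, I would let $\tilde{\rho}_\tau := (\mathrm{Exp}(\tau))_{\#}\gamma$; by the first direction applied to $\gamma$, the curve $(\tilde{\rho}_\tau)$ is a constant-speed $W_2$-geodesic from $\rho_0$ to $\rho_1$. Then concatenating optimal couplings between $\tilde{\rho}_\tau$ and $\rho_\tau$ at matching times, and using uniqueness of constant-speed interpolation along a fixed optimal plan (which follows from $c$-cyclical monotonicity of the restriction of $\pi$ to the relevant subinterval), the two curves coincide.

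The main obstacle is the cut-locus technicality: $\log_x y$ is not defined when $y$ lies in the cut locus of $x$, and can be multi-valued there. I would handle this by invoking the standard fact, a consequence of $c$-cyclical monotonicity for the cost $c(x,y) = \dist(x,y)^2$, that any optimal coupling is concentrated on pairs $(x,y)$ for which there exists a minimizing geodesic from $x$ to $y$ and $y$ does not lie in the cut locus of $x$, except possibly on a set where one can choose a measurable selection of $\log_x y$. A secondary subtlety is that without absolute continuity one cannot use McCann's theorem to represent $\pi$ by a map, so the entire construction must be carried out at the level of plans on $TM$; this is why the statement is phrased in terms of $\gamma \in \calP(TM)$ rather than a single velocity field.
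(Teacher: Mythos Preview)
The paper does not prove this lemma; it is quoted verbatim as Corollary~2.22 of \cite{ambrosio2013user} and used as a black box in the proof of Theorem~\ref{Convt}. So there is no ``paper's own proof'' to compare against, and your proposal is an attempt to reconstruct the argument behind the cited result.

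Your direction (ii)~$\Rightarrow$~(i) is correct and is the standard argument.

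Your direction (i)~$\Rightarrow$~(ii) has a genuine gap. You begin with an \emph{arbitrary} optimal coupling $\pi\in\Pi(\rho_0,\rho_1)$, lift it to $\gamma=\Lambda_\#\pi$, and then argue that the resulting curve $\tilde\rho_\tau=(\mathrm{Exp}(\tau))_\#\gamma$ must coincide with the given geodesic $(\rho_\tau)$ by ``uniqueness of constant-speed interpolation along a fixed optimal plan.'' But Wasserstein geodesics are \emph{not} unique in general: different optimal couplings between the same endpoints can induce distinct geodesics (already in~$\bbr^2$, take $\rho_0=\tfrac12(\delta_{(0,0)}+\delta_{(1,1)})$ and $\rho_1=\tfrac12(\delta_{(0,1)}+\delta_{(1,0)})$). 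So knowing that $(\tilde\rho_\tau)$ and $(\rho_\tau)$ are both geodesics with the same endpoints does not force $\tilde\rho_\tau=\rho_\tau$. The appeal to $c$-cyclical monotonicity of restrictions does not rescue this, because you chose $\pi$ without reference to the intermediate measures $\rho_\tau$.

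The correct strategy, which is what the cited reference carries out, is to build $\gamma$ \emph{from the given geodesic} rather than from an arbitrary optimal plan. One works with optimal plans $\pi_{s,t}\in\Pi(\rho_s,\rho_t)$ for intermediate times, shows they glue consistently (using that a constant-speed geodesic in a geodesic metric space forces intermediate couplings to be supported on midpoints of geodesics), and obtains a measure on the space of geodesics in $M$ whose time-$\tau$ evaluation is $\rho_\tau$; this measure is then identified with an element of $\calP(TM)$ via the initial-velocity map. Your lifting step $\Lambda(x,y)=(x,\log_x y)$ and the cut-locus discussion are fine once the right $\pi$ has been produced, but the production of that $\pi$ is the missing substantive step.
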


The main result of this section is the following theorem.
\begin{theorem}\label{Convt}
Assume that $M\in\calM_\uc$ with $\uc\in\bbr$ satisfies (\textbf{M}) and $K$ is a weakly attractive potential. Also assume $\mathrm{supp}(\rho_0)\subset \overline{B_{r}(p)}$ for some $p\in M$ and $0<r<\rw$, and let $\rho_t$ be a weak solution to system \eqref{eqn:model} with initial density $\rho_0$. Then, $\rho_t$ exhibits the following asymptotic behaviour:
\[
\lim_{t\to\infty}\iint_{\dist(x, y)> \re}\left(\g(\dist(x, y)^2)-g(0)\right) \d\rho_t(x) \d\rho_t(y)=0.
\]
\end{theorem}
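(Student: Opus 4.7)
The plan is to reduce the conclusion to a statement about the energy functional $E$, and then combine the LaSalle-type convergence from Proposition \ref{prop:lasalle} with the characterization of $\calE_{r,p}$ given by Lemma \ref{L5.1} and the continuity of $E$ in $W_2$.

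First, the weak attraction condition \eqref{eqn:Katt-w} gives $\g'(s)=0$ for all $s\in[0,\re^2]$, so for any $x,y$ with $\dist(x,y)\le\re$,
\[
\g(\dist(x,y)^2)-\g(0)=\int_0^{\dist(x,y)^2}\g'(s)\,\d s=0.
\]
Hence the integrand in the theorem vanishes identically on the complementary set $\{\dist(x,y)\le\re\}$, and therefore
\[
\iint_{\dist(x,y)>\re}\bigl(\g(\dist(x,y)^2)-\g(0)\bigr)\,\d\rho_t(x)\,\d\rho_t(y)=\iint\bigl(\g(\dist(x,y)^2)-\g(0)\bigr)\,\d\rho_t(x)\,\d\rho_t(y)=2E[\rho_t]-\g(0).
\]
The theorem thus reduces to showing $E[\rho_t]\to\g(0)/2$ as $t\to\infty$.

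By Proposition \ref{prop:lasalle} and Lemma \ref{L5.1}, $W_2(\rho_t,\calE_{r,p})\to 0$, where $\calE_{r,p}=\{\rho\in\calP(\overline{B_r(p)}):\mathrm{diam}(\mathrm{supp}(\rho))\le\re\}$. I would first check that this set is closed in $(\calP_2(\overline{B_r(p)}),W_2)$: if $\rho_n\to\rho$ narrowly with $\mathrm{diam}(\mathrm{supp}(\rho_n))\le\re$, then each $\rho_n\otimes\rho_n$ is supported on the closed set $\{(x,y)\in M^2:\dist(x,y)\le\re\}$, and narrow convergence of product measures yields the same support containment for $\rho\otimes\rho$, giving $\mathrm{diam}(\mathrm{supp}(\rho))\le\re$. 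Combined with Lemma \ref{L2.1}, $\calE_{r,p}$ is compact, so I can choose $\tilde\rho_t\in\calE_{r,p}$ realizing the distance, with $W_2(\rho_t,\tilde\rho_t)\to 0$.

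For any $\tilde\rho\in\calE_{r,p}$, $\g(\dist(x,y)^2)=\g(0)$ for all $(x,y)$ in the product support of $\tilde\rho$, and hence $E[\tilde\rho]=\g(0)/2$. Since $K$ is continuous and bounded on the compact set $\overline{B_r(p)}\times\overline{B_r(p)}$, the functional $E$ is continuous on $(\calP_2(\overline{B_r(p)}),W_2)$: narrow convergence $\rho_n\to\rho$ implies $\rho_n\otimes\rho_n\to\rho\otimes\rho$ narrowly, against which $K$ integrates continuously. Applying this to $\rho_t$ and $\tilde\rho_t$ gives $E[\rho_t]-\g(0)/2=E[\rho_t]-E[\tilde\rho_t]\to 0$, which finishes the argument. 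I do not anticipate a substantial obstacle here; the main insight is the trivial observation that collapses the restricted integral into $2E[\rho_t]-\g(0)$, and the only mildly technical ingredient is the narrow-closedness of $\calE_{r,p}$ handled above.
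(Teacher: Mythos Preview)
Your approach is correct and genuinely more economical than the paper's. Both arguments start identically---collapse the restricted integral to $2E[\rho_t]-\g(0)$, invoke Proposition~\ref{prop:lasalle} with Lemma~\ref{L5.1} to obtain $\tilde\rho_t\in\calE_{r,p}$ with $W_2(\rho_t,\tilde\rho_t)\to0$, and note $E[\tilde\rho_t]=\g(0)/2$---but then diverge. The paper proceeds quantitatively: it invokes the geodesic-plan description of $W_2$ (Lemma~\ref{wageod}), writes $F[\rho_t]-F[\tilde\rho_t]$ against the plan $\gamma$, and via the mean value theorem and triangle inequality obtains the explicit Lipschitz bound $F[\rho_t]\le 8\rw\,\mathcal{C}\,W_2(\rho_t,\tilde\rho_t)$. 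You instead use the soft fact that $E$ is $W_2$-continuous on $\calP(\overline{B_r(p)})$ (narrow convergence of products against the bounded continuous $K$). Your route is shorter and avoids Lemma~\ref{wageod} entirely; the paper's route buys a quantitative estimate linking $F[\rho_t]$ directly to the $W_2$-distance, which your argument does not yield.

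One small point to tighten: continuity of $E$ alone does not immediately give $E[\rho_t]-E[\tilde\rho_t]\to0$, because \emph{both} arguments move with $t$. You should either note that $E$, being continuous on the compact space $(\calP(\overline{B_r(p)}),W_2)$ (Lemma~\ref{L2.1}), is uniformly continuous; or argue by subsequences (any $t_n\to\infty$ has a sub-subsequence with $\rho_{t_n}\to\rho^*$, hence $\tilde\rho_{t_n}\to\rho^*$ as well, so $\rho^*\in\calE_{r,p}$ by your closedness argument and $E[\rho_{t_n}]\to E[\rho^*]=\g(0)/2$). Either fix is immediate.
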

\begin{proof}
We combine Proposition \ref{prop:lasalle} and Lemma \ref{L5.1} to obtain
\begin{equation}
\label{eqn:limW2}
\lim_{t\to\infty}W_2(\rho_t, \tilde{\rho}_t)=0,
\end{equation}
for some time dependent measure $\tilde{\rho}_t\in\calP(\overline{B_r(p)})$ which satisfies $\mathrm{diam}(\mathrm{supp}(\tilde{\rho}_t))\leq \re$. 

By Lemma \ref{wageod}, for $t>0$ fixed, there exists a plan $\gamma\in \mathcal{P}(TM)$ such that
\[
\int_{TM}\|v\|^2 \d\gamma(x, v)=W_2^2(\rho_t, \tilde{\rho}_t),\quad (\mathrm{Exp}(0))_\#\gamma=\rho_t,\quad (\mathrm{Exp}(1))_\#\gamma=\tilde{\rho}_t.
\] 

Define the following functional of $\rho\in\mathcal{P}(M)$:
\[
F[\rho]:=\iint_{M\times M}(\g(\dist(x, y)^2)-\g(0)) \d\rho(x) \d\rho(y).
\]
Using the plan $\gamma$ we express $F[\rho_t]$ and $F[\tilde{\rho}_t]$ as 
\[
F[\rho_t]=\iint_{TM\times TM}(\g(\dist(x, y)^2)-\g(0)) \d\gamma(x, v) \d\gamma(y, u),
\]
and
\[
F[\tilde{\rho}_t]=\iint_{TM\times TM}(\g(\dist(\exp_x(v), \exp_y(u))^2)-\g(0)) \d\gamma(x, v) \d\gamma(y, u).
\]

Since $\mathrm{diam}(\mathrm{supp}(\tilde{\rho}_t))\leq \re$, we infer that $\dist(\exp_x(v), \exp_y(u))\leq \re$ a.e. with respect to $\gamma\otimes \gamma$. Also, from the assumption \eqref{eqn:Katt-w} on $\g$, we have
\begin{equation}
\label{eqn:gdiff}
\g(\dist(\exp_x(v), \exp_y(u))^2)-\g(0)=0,
\end{equation}
a.e. with respect to $\gamma\otimes \gamma$. Consequently, $F[\tilde{\rho}_t]=0$.  We use this fact to compute
\begin{align}
\label{eqn:Frhot}
F[\rho_t]&=F[\rho_t]-F[\tilde{\rho}_t] \nonumber\\
&=\iint_{TM\times TM}(\g(\dist(x, y)^2)-\g(0)) \d\gamma(x, v) \d\gamma(y, u) \nonumber \\
&\quad-\iint_{TM\times TM}\left(\g(\dist(\exp_x(v), \exp_y(u))^2)-\g(0) \right) \d\gamma(x, v) \d\gamma(y, u) \nonumber \\
&=\iint_{TM\times TM} \left(\g(\dist(x, y)^2)-\g(\dist(\exp_x(v), \exp_y(u))^2) \right) \d\gamma(x, v) \d\gamma(y, u) \nonumber \\
&=\iint_{TM\times TM}\left|\g(\dist(x, y)^2)-\g(\dist(\exp_x(v), \exp_y(u))^2)\right| \d\gamma(x, v) \d\gamma(y, u),
\end{align}
where in the last equality we used \eqref{eqn:gdiff} to get
\[
\g(\dist(x, y)^2)-\g(\dist(\exp_x(v), \exp_y(u))^2)=\g(\dist(x, y)^2)-\g(0)\geq0,
\]
a.e. with respect to $\gamma\otimes \gamma$. 

From the mean value theorem, we have
\begin{equation}
\label{eqn:MVT}
\left|\g(\dist(x, y)^2)-\g(\dist(\exp_x(v), \exp_y(u))^2)\right|= \g'(\eta(x, y, v, u)) \left| \dist(x, y)^2-\dist(\exp_x(v), \exp_y(u))^2 \right|,
\end{equation}
where $\eta$ lies between $\dist(x, y)^2$ and $\dist(\exp_x(v), \exp_y(u))^2$. Since both squared distances are less than $(2\rw)^2$, we have
\begin{equation}
\label{eqn:gp-bound}
g'(\eta(x, y, v, u))\leq \sup_{0\leq \theta\leq 2\rw}g'(\theta^2)=:\mathcal{C}.
\end{equation}
Then, we can combine \eqref{eqn:Frhot}, \eqref{eqn:MVT} and \eqref{eqn:gp-bound} to estimate $F[\rho_t]$ as
\begin{equation}
\label{eqn:Frhot-ineq}
F[\rho_t]\leq \mathcal{C}\iint_{TM\times TM}\left|\dist(x, y)^2-\dist(\exp_x(v), \exp_y(u))^2\right| \d\gamma(x, v) \d\gamma(y, u).
\end{equation}

Now write
\[
\left|\dist(x, y)^2-\dist(\exp_x(v), \exp_y(u))^2\right|=\left|\dist(x, y)-\dist(\exp_x(v), \exp_y(u))\right|\cdot\left|\dist(x, y)+\dist(\exp_x(v), \exp_y(u))\right|.
\]
Since
\[
\dist(x, y)\leq 2\rw\quad\text{and}\quad \dist(\exp_x(v), \exp_y(u))\leq 2\rw,
\]
for a.e. $(x, v, y, u)\in TM^2$ with respect to $\gamma\otimes \gamma$, we have
\[
\left|\dist(x, y)+\dist(\exp_x(v), \exp_y(u))\right|\leq 4\rw,
\]
for a.e. $(x, v, y, u)\in TM^2$ with respect to $\gamma\otimes \gamma$. 

By triangle inequality, we find
\begin{align*}
\left|\dist(x, y)-\dist(\exp_x(v), \exp_y(u))\right|&\leq \left|\dist(x, y)-\dist(\exp_x(v), y)\right|+\left|\dist(\exp_x(v), y)-\dist(\exp_x(v), \exp_y(u))\right|\\[2pt]
&\leq \dist(x, \exp_x(v))+\dist(y, \exp_y(u)) \\[2pt]
& =\|v\|+\|u\|.
\end{align*}
Combining these estimates in \eqref{eqn:Frhot-ineq} we then get
\begin{align*}
F[\rho_t]&\leq 4\rw\mathcal{C}\iint_{TM\times TM}(\|v\|+\|u\|)\d\gamma(x, v) \d\gamma(y, u)\\
&=4\rw\mathcal{C}\iint_{TM\times TM}\|v\|\d\gamma(x, v) \d\gamma(y, u)+4\rw\mathcal{C}\iint_{TM\times TM}\|u\|\d\gamma(x, v) \d\gamma(y, u)\\
&=4\rw\mathcal{C}\int_{TM}\|v\| \d\gamma(x, v) +4\rw\mathcal{C}\int_{TM}\|u\| \d\gamma(y, u)\\
&=8\rw\mathcal{C}\int_{TM}\|v\| \d\gamma(x, v)\\
&\leq8\rw\mathcal{C}\left(\int_{TM} \d\gamma(x, v) \right)^{1/2} \left( \int_{TM}\|v\|^2 \d\gamma(x, v)\right)^{1/2}\\[2pt]
&= 8\rw\mathcal{C}W_2(\rho_t, \tilde{\rho}_t),
\end{align*}
where for the last inequality sign we used Cauchy--Schwartz inequality.

The estimate above holds for any time (note that the constant $\mathcal{C}$ does not depend on $t$). Hence, also using \eqref{eqn:limW2}, we find
\[
0\leq\lim_{t\to\infty}F[\rho_t]\leq \lim_{t\to\infty}  8\rw\mathcal{C}W_2(\rho_t, \tilde{\rho}_t)=0.
\]
We infer that $\lim_{t\to\infty}F[\rho_t]=0$, which is equivalent to
\[
\lim_{t\to\infty}\iint_{M\times M}(\g(\dist(x, y)^2)-\g(0))\d\rho_t(x) \d\rho_t(y)=0.
\]
Finally, since $\g(\dist(x, y)^2)-\g(0)=0$ when $\dist(x, y)\leq \re$, we reach the claimed result:
\begin{align*}
\lim_{t\to\infty}\iint_{\dist(x, y)> \re}(\g(\dist(x, y)^2)-\g(0)) \d\rho_t(x) \d\rho_t(y) &=\lim_{t\to\infty}\iint_{M\times M}(\g(\dist(x, y)^2)-\g(0)) \d\rho_t(x) \d\rho_t(y) \\
&=0.
\end{align*}
\end{proof}

\begin{remark}
Given that $g(\theta^2)>g(0)$ for $\theta > \re$, the asymptotic result in Theorem \ref{Convt} states that $\dist(x,y)\leq \re$ a.e. with respect to $\rho_t \otimes \rho_t$, as $t \to \infty$.
\end{remark}


\section{Numerical results}
\label{sect:numerics}
For numerical simulations we will use the discrete version of model \eqref{eqn:model}. For this purpose, take a positive integer $\N$ and consider a collection of masses $(m_i)_{i=1}^{\N} \subset (0,1)$ such that $\sum_{i=1}^{\N} m_i = 1$, and points $(x_{i}^0)_{i=1}^{\N} \subset M$. Also take an initial density $\rho^{\N}_0$ consisting of $\N$ delta masses supported at these points, i.e., 
\begin{equation}
\label{eq:atomic-initial}
    \rho^{\N}_0 =  \sum_{i=1}^{\N} m_i \delta_{x_{i}^0}.
\end{equation}

The unique weak solution $\rho^{\N} :[0,T) \to \calP(M)$ of \eqref{eqn:model} (in the sense of Definition \ref{defn:sol}), with initial density $\rho^{\N}_0$, is the empirical measure associated to masses $m_i$ and trajectories $(x_i(t))_{i=1}^{\N} \subset M$, i.e., 
\begin{equation}
\label{eq:atomic}
	\rho_t^{\N} = \sum_{i=1}^{\N} m_i \delta_{x_i(t)}, \qquad \mbox{for all $t\in [0,T)$},
\end{equation}
where the trajectories $x_{i}(t)$, $i=1,\dots,\N$, satisfy
\begin{equation}
\label{eq:characteristics-particles}
    \begin{cases}
       x_i'(t) = v[\rho^{\N}](x_i(t)),\\[2pt]
      x_i(0) = x_{i}^0.
   \end{cases}
\end{equation}
We also note here that in the discrete case, the convolution in the expression for the velocity $v[\rho^\N]$ (see \eqref{eqn:model} and \eqref{B-2}) reduces to the finite sum:
\begin{equation}
\label{eqn:v-discrete}
 v[\rho^{\N}](x_i(t)) = - \sum_{j=1}^{\N} m_i\nabla_{x_i} K(x_i(t),x_j(t)).
\end{equation}

The numerical simulations we present below are for $M=SO(3)$, the $3$-dimensional special orthogonal group (also referred to here as the rotation group), given by
\[
SO(3) = \{ R \in \bbr^{3 \times 3}: R^T R = I \text{ and } \text{det }R = 1 \}.
\]
The rotation group is the configuration space of a rigid body in $\bbr^3$ that undergoes rotations only (no translations) and has many applications in engineering, in particular in robotics \cite{TronAfsariVidal2012}. It is topologically nontrivial, as it is not simply connected, it has constant sectional curvature $\secc=1/4$ and radius of injectivity equal to $\pi$.

We parametrize $SO(3)$ using the angle-axis representation. In this parametrization, a rotation matrix $R \in SO(3)$ is identified via the exponential map with a pair $(\theta,\bv) \in [0,\pi] \times S^2$, where $S^2$ denotes the unit sphere in $\bbr^3$. The unit vector $\bv$ indicates the axis of rotation and $\theta$ represents the angle of rotation (by the right-hand rule) about the axis. Using the angle-axis representation we solve numerically the ODE system \eqref{eq:characteristics-particles} (with velocities given by \eqref{eqn:v-discrete}) for the evolution of $\N$ rotation matrices $R_i(t)$ (see \cite{FeHaPa2021} for details on the numerical implementation). We take all particles to have identical masses, i.e., $m_i = 1/\N$, $i=1,\dots,\N$. For time integration we use the 4th order Runge-Kutta method.

For plotting purposes we identify $SO(3)$ with a ball in $\bbr^3$ of radius $\pi$ centred at the origin. The center of the ball corresponds to the identity matrix $I$. A generic point within this ball represents a rotation matrix, with rotation angle given by the distance from the point to the centre, and axis given by the ray from the centre to the point. By this representation, antipodal points on the surface of the ball are identified, as they represent the same rotation matrix (rotation by $\pi$ about a ray gives the same result as rotation by $\pi$ about the opposite ray).

Figure \ref{fig:consensus} illustrates the formation of asymptotic consensus on $SO(3)$ for an interaction potential in power-law form  \eqref{eqn:p-law}. As shown in Example \ref{ex:power-law}, the decay rates of the diameter can be computed explicitly for such potential. The numerical results correspond to a simulation using $\N=40$ particles (i.e., rotation matrices) initialized as follows.  The rotation angles $\theta_i$ were selected randomly in the interval $(0,\pi/4)$, while the unit vectors $\bv_i$ were generated in spherical coordinates, with the polar and azimuthal angles drawn randomly in the intervals $(0,\pi)$ and $(0,2 \pi)$, respectively. By this initialization, all rotation matrices $R_i$ at time $t=0$ are within distance $\pi/4$ from the identity matrix and hence, the assumptions of Theorem \ref{thm:rate-conv} are satisfied.

The plots in Figure \ref{fig:consensus}(a) and (b) correspond to the quadratic potential ($\beta=2$). In Figure \ref{fig:consensus}(a) the initial particles are indicated by black dots and the consensus point by a red diamond. For visualization purposes we do not show the full ball of radius $\pi$ there, but set the axis limits to $[-1,1]$.  Figure \ref{fig:consensus}(b) shows a semi-log plot of the diameter $\Delta$ of the configuration $\{R_i\}_{i=1}^N$ over time, demonstrating the exponential decay \eqref{eqn:beta2}. The rate of decay (slope of the line) is approximately $-1.009$. Figure \ref{fig:consensus}(c) illustrates the decay rate \eqref{eqn:betag2} of the diameter $\Delta$ for various values of the exponent $\beta$ ($\beta = 3,4$, and $8$). The figure shows a log-log plot of the diameter over time, where a linear fit on the last quarter of the numerical run shows decay rates (slopes) that match to two decimal places the analytical rates of $-1$, $-1/2$ and $-1/6 \approx -0.1666$  from  \eqref{eqn:betag2}.

\begin{figure}[!htbp]
 \begin{center}
 \begin{tabular}{ccc}
 \includegraphics[width=0.35\textwidth]{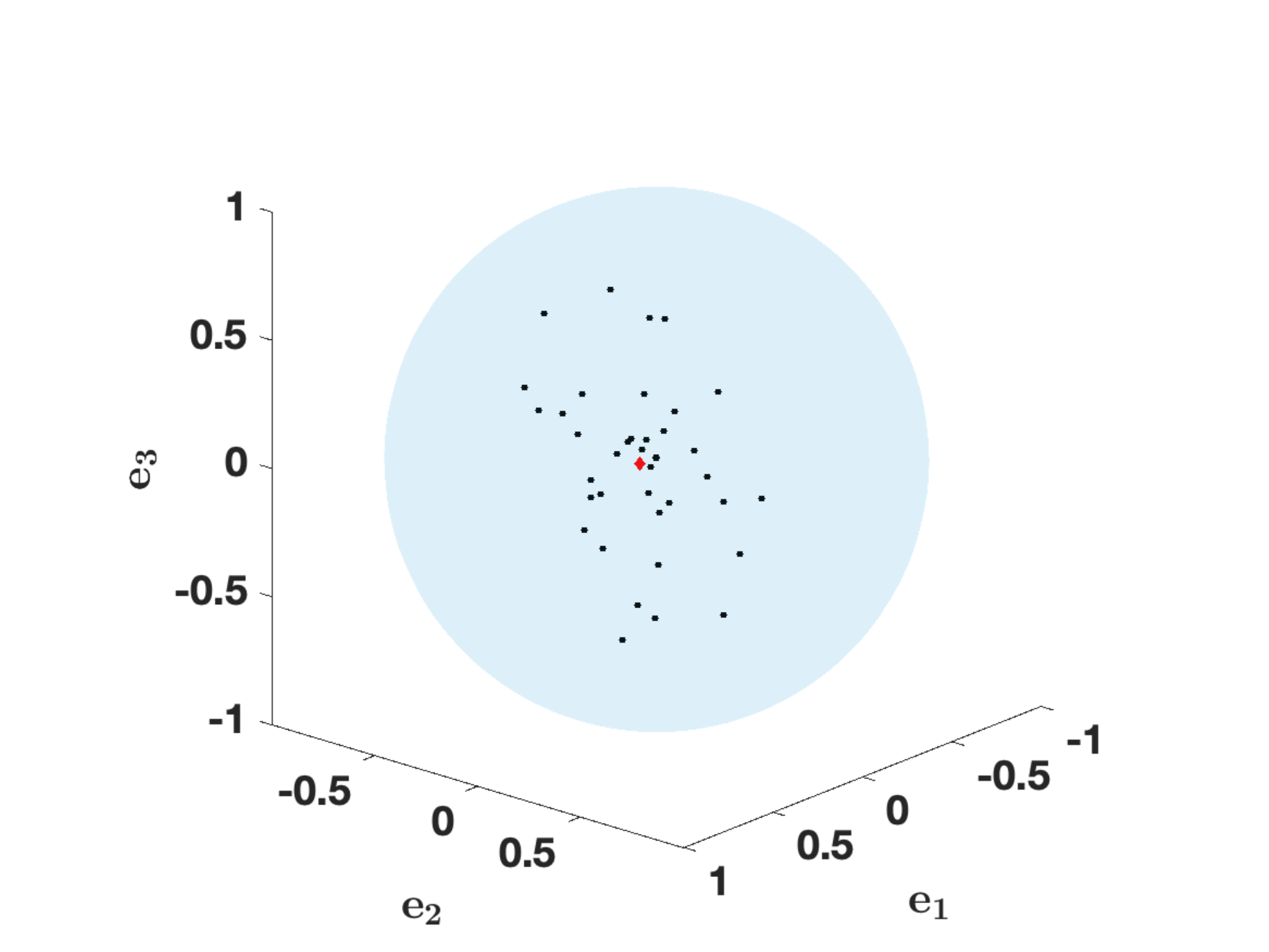} & 
 \includegraphics[width=0.3\textwidth]{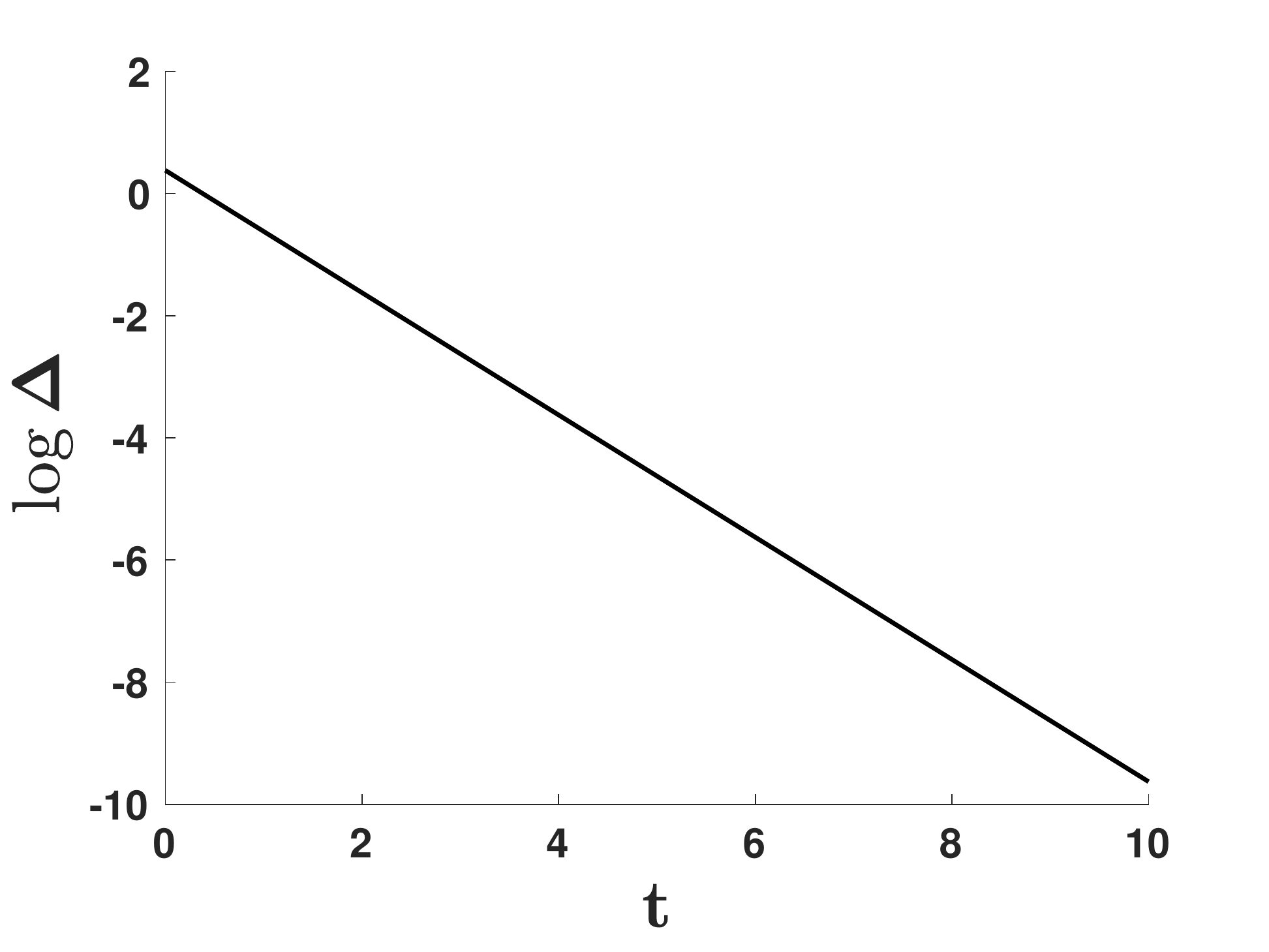}  &
 \includegraphics[width=0.3\textwidth]{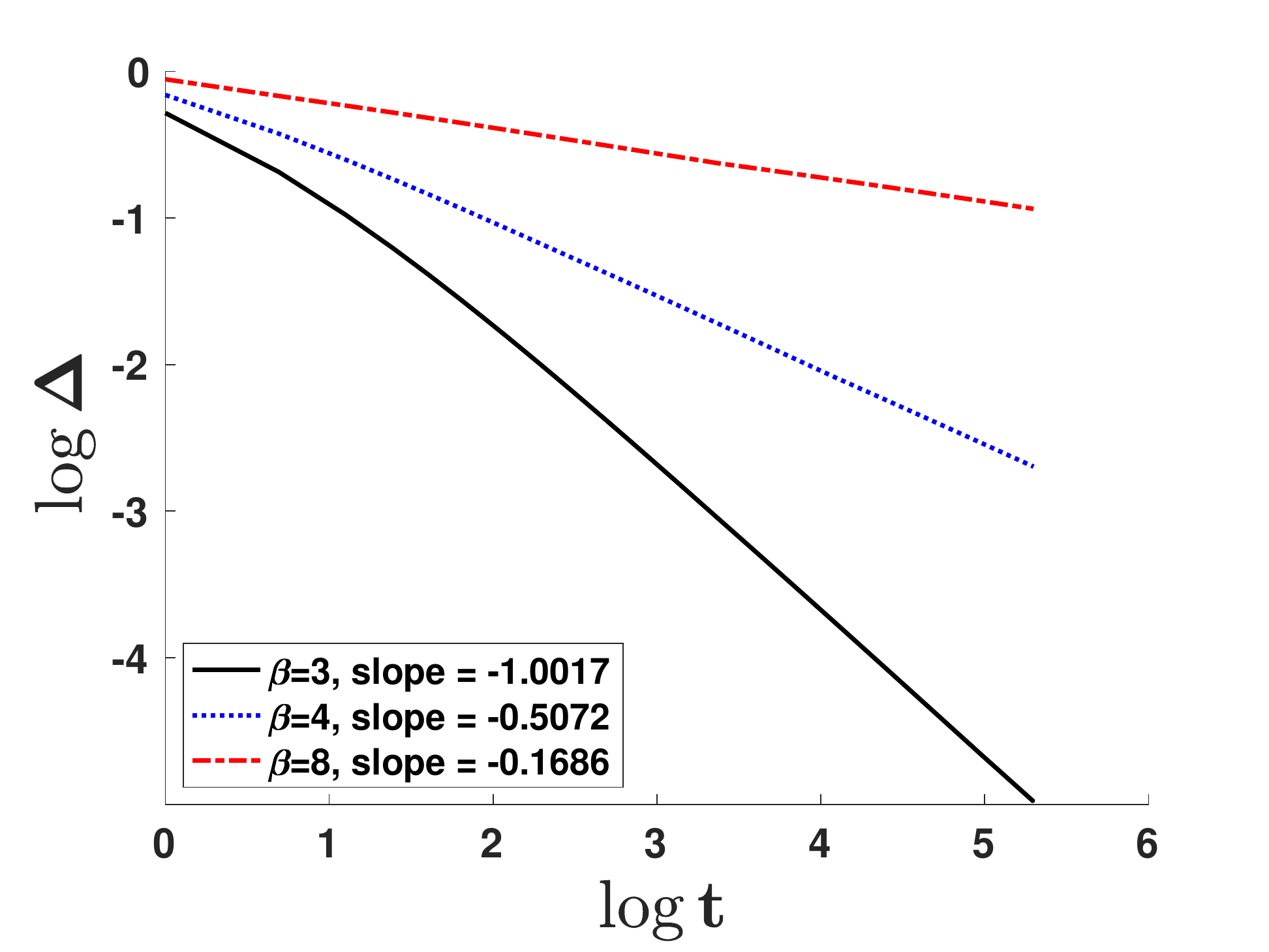} \\
 (a) & (b) & (c)  \end{tabular}
 \begin{center}
 \end{center}
\caption{Asymptotic consensus on the rotation group $SO(3)$, for $\N=40$ matrices with the attractive power-law potential \eqref{eqn:p-law}. (a) Quadratic potential ($\beta=2$). Initial particles are indicated by black dots, and the consensus location is shown with red diamond. (b) Same simulation as in (a), showing the exponential decay of the diameter in a semi-log plot -- see Example \ref{ex:power-law} and equation \eqref{eqn:beta2}. (c) Power-law decay of the diameter in a log-log plot, for different values of $\beta$ - see Example \ref{ex:power-law} and equation \eqref{eqn:betag2}. The numerical rates of decay, as indicated in the legend, match exactly, in their first two digits, the analytical rates $-1$, $-1/2$ and $-1/6$ from \eqref{eqn:betag2}.}
\label{fig:consensus}
\end{center}
\end{figure}


\appendix

\section{Some comments on Theorem \ref{thm:wp}}
\label{appendix:wp}

As noted after Theorem \ref{thm:wp},  the well-posedness result in \cite[Theorem 5.1]{FePa2021} is more restrictive with respect to the size of the set $U$. Specifically, it is assumed there that 
\begin{equation}
\label{eqn:diam-rest1}
\Delta=\mathrm{diam}(U)<\frac{\pi}{2\sqrt{\uc}}.
\end{equation}
The restriction \eqref{eqn:diam-rest1} comes from how the bound on the Hessian of the squared distance function is derived. The authors of \cite{FePa2021} use \cite[Theorem 6.6.1]{Jost2017}, which states that provided 
\begin{equation}
\label{eqn:K-bounds}
\lambda \leq \secc \leq \mu, \qquad \text{ on $B_r(z)$},
\end{equation}
with $\lambda \leq 0 \leq \mu$, then
\begin{equation}
\label{eqn:bHessian}
2\sqrt{\mu}\, d(x,z) \cot( \sqrt{\mu}\, d(x,z)) \|v\| ^2 \leq \langle \mathrm{Hess}_v d_z^2(x),v \rangle \leq 2 \sqrt{-\lambda}\, d(x,z) \coth( \sqrt{-\lambda}\, d(x,z)) \|v\|^2,
\end{equation}
for all $x \in B_r(z)$ and $v \in T_x M$. Here, $d_z$ denotes the distance function to point $z$. Then, the authors of \cite{FePa2021} restrict the diameter of $U$ as in \eqref{eqn:diam-rest1}
to make the left-hand-side of \eqref{eqn:bHessian} nonnegative and bound $\mathrm{Hess }\, \dist_z^2$ by
\[
|\mathrm{Hess}\dist_z^2|\leq L:=2\sqrt{-\lc}\Delta \coth(-\sqrt{\lc}\Delta).
\]

However, by assuming the weaker condition on $\Delta$:
\[
\Delta=\mathrm{diam}(U)<\frac{\pi}{\sqrt{\uc}},
\]
one can bound $\mathrm{Hess} \, \dist_z^2$ by
\[
|\mathrm{Hess} \, \dist_z^2|\leq L':=\max\left\{2\sqrt{-\lc}\Delta \coth(-\sqrt{\lc}\Delta),
\left|2\sqrt{\uc}\Delta\cot(\sqrt{\uc}\Delta)
\right|
\right\}.
\]
All the arguments in the proof of \cite[Theorem 5.1]{FePa2021} would then follow with this more relaxed bound on $\mathrm{diam}(U)$. Theorem \ref{thm:wp} reflects this extension.


\section{Proof of Lemma \ref{L2.1}}\label{appendix:L2}
Let $(\rho_n)_{n=1}^\infty\subset \calP_2(\overline{B_r(p)})$ converge to $\rho\in \calP_2(M)$ with respect to the metric $W_2$, i.e.,
\[
\lim_{n\to\infty}W_2(\rho_n, \rho)=0.
\]
We have, for any $\epsilon >0$,
\begin{align*}
W_2(\rho_n, \rho)^2&=\inf_{\gamma\in\Pi(\rho_n, \rho)}\iint_{M\times M}\dist(x, y)^2 \d\gamma(x, y)\\
&\geq \inf_{\gamma\in \Pi(\rho_n, \rho)} \iint_{M\times (B_{r+\epsilon}(p))^c}\dist(x, y)^2 \d\gamma(x, y)\\
&\geq  \inf_{\gamma\in \Pi(\rho_n, \rho)}  \iint_{M\times (B_{r+\epsilon}(p))^c}\epsilon^2 \d\gamma(x, y)\\[2pt]
&=\epsilon^2 \rho( B_{r+\epsilon}(p)^c)\\
&=\epsilon^2\left(1-\rho(B_{r+\epsilon}(p))\right).
\end{align*}

Since $\epsilon>0$ is independent of $n$, we have
\[
0=\lim_{n\to\infty}W_2(\rho_n, \rho)^2\geq \epsilon^2\left(1-\rho(B_{r+\epsilon}(p))\right)\geq0,
\]
and this yields
\[
1-\rho(B_{r+\epsilon}(p))=0\quad \Longleftrightarrow\quad \rho(B_{r+\epsilon}(p))=1,
\]
for all $\epsilon>0$. 

We then get
\[
\rho(\overline{B_{r}(p)})=\rho\left(\bigcap_{n=1}^\infty B_{r+1/n}(p)\right)=\lim_{n\to\infty}\rho(B_{r+1/n}(p))=1.
\]
This yields $\rho\in \calP_2(\overline{B_r(p)})$ which implies that $\calP_2(\overline{B_r(p)})$ is compact.


\section{Proof of Lemma \ref{L4.1}}\label{app1}
Since $x, y, z\in \overline{B_r(p)}$ and $0<r<\rc=\min\left\{\frac{\inj(M)}{2},\frac{\pi}{4\sqrt{\mu}}\right\}$, we have
\begin{align}\label{D-4-3}
\dist(x, y), \dist(y, z), \dist(z, x)\leq2r=2(r_c-\epsilon)\leq 2\left(\frac{\pi}{4\sqrt{\mu}}-\epsilon\right)=\frac{\pi}{2\sqrt{\mu}}-2\epsilon,
\end{align}
where $\epsilon:=r_c-r$. 
Write the left-hand-sides of  \eqref{D-4-0} and \eqref{D-4-1} as
\begin{multline}
\label{eqn:ineq-Ixyz}
\g'(\dist(x, z)^2)\log_xz\cdot\log_xy+\g'(\dist(y, z)^2)\log_yz\cdot\log_yx = \\[5pt]
\dist(x, y)\underbrace{\left(
\g'(\dist(x, z)^2)\dist(x, z)\cos\angle(yxz)+\g'(\dist(y, z)^2)\dist(y, z)\cos\angle(xyz)
\right)}_{=:\mathcal{I}_{xyz}}.
\end{multline}
We will estimate $\mathcal{I}_{xyz}$ when $x\neq y$.
\smallskip

\noindent (Case 1: $\uc>0$) 
Write
\begin{align}\label{D-4}
\begin{aligned}
\mathcal{I}_{xyz}&=\left(
\frac{\dist(x, z)}{\sin\left(\sqrt{\uc}\dist(x, z)\right)}\g'(\dist(x, z)^2)
\right)\sin\left(\sqrt{\uc}\dist(x, z)\right) \cos\angle(yxz)\\
&\quad +\left(
\frac{\dist(y, z)}{\sin\left(\sqrt{\uc}\dist(y, z)\right)}\g'(\dist(y, z)^2)
\right)\sin\left(\sqrt{\uc}\dist(y, z)\right)\cos\angle(xyz).
\end{aligned}
\end{align}
By Lemma \ref{L2.3} we have the following inequalities:
\begin{align}\label{D-5}
\begin{aligned}
\cos\left(\sqrt{\uc}\dist(y, z)\right)\leq \cos\left(\sqrt{\uc}\dist(x, y)\right) \cos\left(\sqrt{\uc}\dist(x, z)\right)+\sin\left(\sqrt{\uc}\dist(x, y)\right) \sin\left(\sqrt{\uc}\dist(x, z)\right)\cos\angle(yxz),\\[5pt]
\cos\left(\sqrt{\uc}\dist(x, z)\right)\leq \cos\left(\sqrt{\uc}\dist(x, y)\right) \cos\left(\sqrt{\uc}\dist(y, z)\right)+\sin\left(\sqrt{\uc}\dist(x, y)\right) \sin\left(\sqrt{\uc}\dist(y, z)\right)\cos\angle(xyz).
\end{aligned}
\end{align}
We can rewrite \eqref{D-5} as
\begin{align}\label{D-6}
\begin{aligned}
\sin\left(\sqrt{\uc}\dist(x, z)\right)\cos\angle(yxz)\geq \frac{\cos\left(\sqrt{\uc}\dist(y, z)\right)-\cos\left(\sqrt{\uc}\dist(x, y)\right) \cos\left(\sqrt{\uc}\dist(x, z)\right)}{\sin\left(\sqrt{\uc}\dist(x, y)\right) },\\[2pt]
\sin\left(\sqrt{\uc}\dist(y, z)\right)\cos\angle(xyz)\geq \frac{\cos\left(\sqrt{\uc}\dist(x, z)\right)-\cos\left(\sqrt{\uc}\dist(x, y)\right) \cos\left(\sqrt{\uc}\dist(y, z)\right)}{\sin\left(\sqrt{\uc}\dist(x, y)\right) }.
\end{aligned}
\end{align}

Now substitute \eqref{D-6} into \eqref{D-4} to obtain
\begin{align}\label{D-7}
\begin{aligned}
\mathcal{I}_{xyz}&\geq \left(
\frac{\dist(x, z)}{\sin\left(\sqrt{\uc}\dist(x, z)\right)}\g'(\dist(x, z)^2)
\right) \frac{\cos\left(\sqrt{\uc}\dist(y, z)\right)-\cos\left(\sqrt{\uc}\dist(x, y)\right) \cos\left(\sqrt{\uc}\dist(x, z)\right)}{\sin\left(\sqrt{\uc}\dist(x, y)\right) }\\
&\quad +\left(
\frac{\dist(y, z)}{\sin\left(\sqrt{\uc}\dist(y, z)\right)}\g'(\dist(y, z)^2)
\right) \frac{\cos\left(\sqrt{\uc}\dist(x, z)\right)-\cos\left(\sqrt{\uc}\dist(x, y)\right) \cos\left(\sqrt{\uc}\dist(y, z)\right)}{\sin\left(\sqrt{\uc}\dist(x, y)\right) }.
\end{aligned}
\end{align}
We use the following simple identity:
\begin{align}\label{D-7-1}
A_1B_1+A_2B_2=\frac{1}{2}(A_1+A_2)(B_1+B_2)+\frac{1}{2}(A_1-A_2)(B_1-B_2)
\end{align}
to rewrite the right-hand-side of \eqref{D-7} as 
\begin{align*}
& \frac{1}{2}\left(
\frac{\dist(x, z)}{\sin\left(\sqrt{\uc}\dist(x, z)\right)}\g'(\dist(x, z)^2)+\frac{\dist(y, z)}{\sin\left(\sqrt{\uc}\dist(y, z)\right)}\g'(\dist(y, z)^2)
\right)\\
&\hspace{6cm}\times \frac{(\cos\left(\sqrt{\uc}\dist(x, z)\right)+\cos\left(\sqrt{\uc}\dist(y, z)\right))(1-\cos\left(\sqrt{\uc}\dist(x, y)\right))}{\sin\left(\sqrt{\uc}\dist(x, y)\right)}\\
& + \frac{1}{2}\left(
\frac{\dist(x, z)}{\sin\left(\sqrt{\uc}\dist(x, z)\right)}\g'(\dist(x, z)^2)-\frac{\dist(y, z)}{\sin\left(\sqrt{\uc}\dist(y, z)\right)}\g'(\dist(y, z)^2)
\right)\\
&\hspace{6cm}\times \frac{(-\cos\left(\sqrt{\uc}\dist(x, z)\right)+\cos\left(\sqrt{\uc}\dist(y, z)\right))(1+\cos\left(\sqrt{\uc}\dist(x, y)\right))}{\sin\left(\sqrt{\uc}\dist(x, y)\right)}\\
&=:\mathcal{I}_{xyz}^1+\mathcal{I}_{xyz}^2.
\end{align*}

Since we assumed that $\frac{\theta}{\sin(\sqrt{\uc}\theta)}\g'(\theta^2)$ is  non-decreasing and non-negative, we infer that 
\[
\frac{\dist(x, z)}{\sin\left(\sqrt{\uc}\dist(x, z)\right)}\g'(\dist(x, z)^2)-\frac{\dist(y, z)}{\sin\left(\sqrt{\uc}\dist(y, z)\right)}\g'(\dist(y, z)^2)
\]
and
\[
-\cos\left(\sqrt{\uc}\dist(x, z)\right)+\cos\left(\sqrt{\uc}\dist(y, z)\right)
\]
have the same sign. 
This yields
\[
\mathcal{I}_{xyz}^2\geq 0,
\]
and hence,
\begin{align}\label{D-8}
\mathcal{I}_{xyz}\geq \mathcal{I}_{xyz}^1+\mathcal{I}_{xyz}^2\geq \mathcal{I}_{xyz}^1.
\end{align}

By triangle inequality we have
\[
\dist(x, z)+\dist(y, z)\geq \dist(x, y),
\]
which implies that
\begin{align}\label{D-8-1}
\max\left\{
\dist(x, z), \dist(y, z)
\right\}\geq \frac{1}{2}\dist(x, y).
\end{align}
Since $\frac{\theta}{\sin(\sqrt{\uc}\theta)}g'(\theta^2)$ is non-decreasing and non-negative, we have
\begin{align}\label{D-8-1-1}
\frac{\dist(x, z)}{\sin\left(\sqrt{\uc}\dist(x, z)\right)}\g'(\dist(x, z)^2)+\frac{\dist(y, z)}{\sin\left(\sqrt{\uc}\dist(y, z)\right)}\g'(\dist(y, z)^2)\geq \frac{\dist(x, y)/2}{\sin(\sqrt{\uc}\dist(x, y)/2)}g'(\dist(x, y)^2/4).
\end{align}
Here, we dropped the smaller term, and we estimated the larger term using \eqref{D-8-1}. 

On the other hand, we have
\begin{equation}\label{D-8-2}
\begin{aligned}
\mathcal{I}_{xyz}^1&= \frac{1}{2}\left(
\frac{\dist(x, z)}{\sin\left(\sqrt{\uc}\dist(x, z)\right)}\g'(\dist(x, z)^2)+\frac{\dist(y, z)}{\sin\left(\sqrt{\uc}\dist(y, z)\right)}\g'(\dist(y, z)^2)
\right)\\[5pt]
&\hspace{4cm}\times(\cos\left(\sqrt{\uc}\dist(x, z)\right)+\cos\left(\sqrt{\uc}\dist(y, z)\right))\tan\left(\sqrt{\uc}\dist(x, y)/2\right).
\end{aligned}
\end{equation}

Note that $\cos(\sqrt{\mu}\dist(x, z))$ and $\cos(\sqrt{\mu}\dist(y, z))$ are non-negative by \eqref{D-4-3}. Then combine \eqref{D-8-1-1} and \eqref{D-8-2} to get
\begin{align}\label{D-8-3}
\begin{aligned}
\mathcal{I}_{xyz}^1&\geq\frac{1}{2}\frac{\dist(x, y)/2}{\sin(\sqrt{\uc}\dist(x, y)/2)}g'(\dist(x, y)^2/4)(\cos\left(\sqrt{\uc}\dist(x, z)\right)+\cos\left(\sqrt{\uc}\dist(y, z)\right))\tan\left(\sqrt{\uc}\dist(x, y)/2\right)\\[5pt]
&=\frac{1}{4}\frac{\dist(x, y)}{\cos(\sqrt{\uc}\dist(x, y)/2)}g'(\dist(x, y)^2/4)(\cos\left(\sqrt{\uc}\dist(x, z)\right)+\cos\left(\sqrt{\uc}\dist(y, z)\right)).
\end{aligned}
\end{align}

Next, we will estimate
\[
\cos(\sqrt{\uc}\dist(x, z))+\cos(\sqrt{\uc}\dist(y, z)).
\]
By \eqref{D-4-3}, $\dist(x, z) \leq \frac{\pi}{2\sqrt\uc}-2\epsilon$, which implies
\[
\cos(\sqrt{\uc}\dist(x, z)) \geq \cos\left(\sqrt{\uc}\left(\frac{\pi}{2\sqrt\uc}-2\epsilon\right)\right)=\sin(2\sqrt{\uc}\epsilon).
\]
The same inequality also holds for $\cos(\sqrt{\uc}\dist(y, z))$, as $\dist(y, z) \leq  \frac{\pi}{2\sqrt\uc}-2\epsilon$ as well. By substituting the inequality above into \eqref{D-8-3} we get
\begin{equation}\label{D-8-4}
\mathcal{I}_{xyz}^1\geq \frac{1}{2} \frac{\dist(x, y)}{\cos(\sqrt{\uc}\dist(x, y)/2)}g'(\dist(x, y)^2/4)\sin(2\sqrt{\uc}\epsilon)\geq \frac{\sin(2\sqrt{\uc}\epsilon)}{2}\dist(x, y)g'(\dist(x, y)^2/4).
\end{equation}

Finally, combine \eqref{eqn:ineq-Ixyz}, \eqref{D-8} and \eqref{D-8-4} to get
\begin{align*}
\g'(\dist(x, z)^2)\log_xz\cdot\log_xy+\g'(\dist(y, z)^2)\log_yz\cdot\log_yx \geq  \frac{\sin(2\sqrt{\uc}\epsilon)}{2}\dist(x, y)^2\g'(\dist(x, y)^2/4),
\end{align*}
which is the desired result.
\medskip

\noindent (Case 2: $\uc\leq 0$) Using again Lemma \ref{L2.3} we have for this case:
\begin{align*}
\dist(y, z)^2\geq \dist(x, y)^2+\dist(x, z)^2-2\dist(x, y)\dist(x, z)\cos\angle(yxz),\\[2pt]
\dist(x, z)^2\geq \dist(x, y)^2+\dist(y, z)^2-2\dist(x, y)\dist(y, z)\cos\angle(xyz).
\end{align*}
We rewrite the above as
\begin{align*}
\dist(x, z)\cos\angle(yxz)\geq\frac{\dist(x, y)^2+\dist(x, z)^2-\dist(y, z)^2}{2\dist(x, y)},\\[2pt]
\dist(y, z)\cos\angle(xyz)\geq\frac{\dist(x, y)^2+\dist(y, z)^2-\dist(x, z)^2}{2\dist(x, y)},
\end{align*}
and use it to estimate $\mathcal{I}_{xyz}$ as
\begin{align*}
\mathcal{I}_{xyz}
&\geq\g'(\dist(x, z)^2)\left(\frac{\dist(x, y)^2+\dist(x, z)^2-\dist(y, z)^2}{2\dist(x, y)}\right)+\g'(\dist(y, z)^2)\left(\frac{\dist(x, y)^2+\dist(y, z)^2-\dist(x, z)^2}{2\dist(x, y)}\right).
\end{align*}

We use again \eqref{D-7-1} to find
\begin{align*}
\mathcal{I}_{xyz}&\geq\frac{1}{2}\left(
\g'(\dist(x, z)^2)+\g'(\dist(y, z)^2)
\right)\left(
\frac{\dist(x, y)^2+\dist(x, z)^2-\dist(y, z)^2}{2\dist(x, y)}+\frac{\dist(x, y)^2+\dist(y, z)^2-\dist(x, z)^2}{2\dist(x, y)}
\right)\\[5pt]
&+
\frac{1}{2}\left(
\g'(\dist(x, z)^2)-\g'(\dist(y, z)^2)
\right)\left(
\frac{\dist(x, y)^2+\dist(x, z)^2-\dist(y, z)^2}{2\dist(x, y)}-\frac{\dist(x, y)^2+\dist(y, z)^2-\dist(x, z)^2}{2\dist(x, y)}
\right)\\[5pt]
&=\frac{\dist(x, y)}{2}\left(
\g'(\dist(x, z)^2)+\g'(\dist(y, z)^2)
\right)+\frac{1}{2\dist(x, y)}\left(
\g'(\dist(x, z)^2)-\g'(\dist(y, z)^2)
\right)(\dist(x, z)^2-\dist(y, z)^2).
\end{align*}

Since $\g'$ is non-decreasing, we have
\[
\frac{1}{2\dist(x, y)}\left(
\g'(\dist(x, z)^2)-\g'(\dist(y, z)^2)
\right)(\dist(x, z)^2-\dist(y, z)^2)\geq0.
\]
Finally, we get
\[
\mathcal{I}_{xyz}\geq\frac{\dist(x, y)}{2}\left(
\g'(\dist(x, z)^2)+\g'(\dist(y, z)^2)
\right)\geq \frac{\dist(x, y)}{2} g'(\dist(x, y)^2/4),
\]
where for the second inequality we used \eqref{D-8-1}. By combining \eqref{eqn:ineq-Ixyz} with the inequality above one can then reach the desired result.
\qed

\bibliographystyle{abbrv}
\def\url#1{}
\bibliography{lit.bib}

\end{document}